\definecolor{cite}{RGB}{44,123,182}
\definecolor{ref}{RGB}{215,25,28}
\theoremstyle{plain}
\newtheorem{proposition}{Proposition}[section]
\newtheorem{corollary}[proposition]{Corollary}
\newtheorem{conjecture}[proposition]{Conjecture}
\newtheorem{lemma}[proposition]{Lemma}
\newtheorem{theorem}[proposition]{Theorem}
\theoremstyle{definition}
\newtheorem{definition}[proposition]{Definition}
\newtheorem{example}[proposition]{Example}
\theoremstyle{remark}
\newtheorem{remark}[proposition]{Remark}
\newcommand{\DD}{\mathcal{D}}
\newcommand{\TT}{\mathcal{T}}
\newcommand{\FF}{\mathcal{F}}
\newcommand{\EE}{\mathcal{E}}
\newcommand{\BB}{\mathcal{B}}
\newcommand{\KK}{\mathcal{K}}
\newcommand{\Id}{\mathrm{Id}}
\newcommand{\RHom}{\mathbf{R}\mathrm{Hom}}
\DeclarePairedDelimiter{\pair}{\langle}{\rangle}
\newcommand{\fr}{\mathrm{fr}}
\newcommand{\BigWedge}{\mathord{\adjustbox{valign=m,totalheight=.8\baselineskip}{$\bigwedge$}}}
\newcommand{\AAA}[1]{\textcolor{red}{#1}}
\begin{document}

\title{Orbifold Semiorthogonal Decompositions for Abelian Varieties}

\author[B. Lim]{Bronson Lim}
\address{BL: Department of Mathematics \\ California State University San
  Bernardino \\ San Bernardino, CA 92407, USA} 
  \email{bronson.lim@csusb.edu}
\author[F. Rota]{Franco Rota}
\address{FR: School of Mathematics \& Statitics \\ University of Glasgow \\ Glasgow, G128QQ, UK} 
\email{franco.rota@glasgow.ac.uk}

\subjclass[2020]{Primary 14F08; Secondary 14K99}
\keywords{Derived Categories, Fourier-Mukai Functors}

\begin{abstract}
  Suppose \(G\) is a finite group acting on an Abelian variety \(A\) such that
  the coarse moduli space \(A/G\) is smooth. Using the recent classification result due to
  Auffarth, Lucchini Arteche, and Quezada, we construct an orbifold semiorthogonal decomposition for
  \(\mathcal{D}[A/G]\) provided \(G = T\rtimes H\) with \(T\) a subgroup of translations and \(H\) is a subgroup of group automorphisms. 
\end{abstract}

\maketitle

\section{Introduction}
\label{sec:intro}

\subsection{Orbifold semiorthogonal decompositions}

Suppose \(G\) is a finite group acting effectively on a smooth quasi-projective
variety \(X\). If
each of the quotients \(\overline{X^\lambda} = X^\lambda/C(\lambda)\) is smooth,
where \(X^\lambda\) is the fixed locus of \(\lambda\in G\) and \(C(\lambda)\) is
the centralizer of \(\lambda\in G\), then there is an orthogonal decomposition
\begin{equation}
  \mathrm{HH}_\ast\left([X/G]\right) = \bigoplus_{\lambda\in G/\sim} \mathrm{HH}_\ast(\overline{X^\lambda})
  \label{eq:HHDecomposition}
\end{equation}
where \(G/\sim\) is the set of conjugacy classes. This decomposition holds for any
additive invariant, see \cite[Remark 1.26]{tvdb-orbifold}. In
\cite{pvdb-equivariant}, the authors conjecture that this
remains true on the derived (or dg) level. Since the derived category decomposes
orthogonally if and only if the variety is not connected, we should not expect a
fully orthogonal decomposition but only a \textit{semiorthogonal decomposition},
see Definition \ref{def:sod}. 

\begin{conjecture}[Orbifold Semiorthogonal Decomposition {\cite[Conjecture A]{pvdb-equivariant}}]
  Suppose \(G\) is a a finite group acting effectively on a smooth variety \(X\)
  and that for all \(\lambda\in G/\sim\) the quotient
  \(\overline{X^\lambda} = X^\lambda/C(\lambda)\) is smooth. Then \(\mathcal{D}[X/G]\) admits a semiorthogonal decomposition where the components \(C_{[\lambda]}\) are in bijection
  with conjugacy classes and \(C_{[\lambda]}\cong
  \mathcal{D}(\overline{X^\lambda})\).
  \label{conj:msodc}
\end{conjecture}
A semiorthogonal decomposition satisfying the conditions of Conjecture \ref{conj:msodc} is called an \textit{orbifold semiorthogonal decomposition}, see Definition
\ref{def:msod}. It is induced by the orbifold structure of $[X/G]$, and it induces the decomposition \eqref{eq:HHDecomposition}, which is motivic in the sense of \cite{tvdb-orbifold} and \cite{toen-motivedm}.
There are several known cases of the conjecture which we list now.
\begin{itemize}
	\item The natural actions of complex reflection groups of types \(A,\ B,\ G_2, F_4\), and \(G(m,1,n)\) acting on \(\mathbb{A}^n\), \cite[Theorem C]{pvdb-equivariant}.
	\item Semidirect product actions of type \(G\rtimes S_n\) on \(C^n\) where \(C\) is a curve and \(G\) is an effective group acting on \(C\), \cite[\S 4.3]{pvdb-equivariant}.
	\item Dihedral group actions on \(\mathbb{A}^2\), \cite[Corollary 6.5.5]{potter-thesis}.
	\item Curves, \cite[Theorem 1.2]{polishchuk-toric}.
	\item Cyclic group quotients, \cite[Theorem 4.1]{kuznetsov-perry-cyclic}, \cite{KPS18}.
\end{itemize}

\subsection{Main result}

Our main result is an affirmative answer to Conjecture \ref{conj:msodc} when
\(A\) is an Abelian variety and \(G = T\rtimes H\) with \(T\) a translation subgroup and \(H\) a subgroup of group homomorphisms.

\begin{theorem}[ = Theorem \ref{thm:msod-abelian-vars-full}]
  Let \(A\) be an Abelian variety and \(G=T\rtimes H\) a finite group of automorphisms of
  \(A\) such that the quotient \(A/G\) is smooth. Then there exists an orbifold semiorthogonal decomposition for
  \(\mathcal{D}[A/G]\). 
  \label{thm:main-msod}
\end{theorem}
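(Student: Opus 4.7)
The plan is to attack Theorem~\ref{thm:main-msod} in two stages: first reduce from the general semidirect product $G = T \rtimes H$ to a purely automorphism-theoretic setting, then induct on the order of the automorphism group, using the cyclic quotient case as the atomic building block.

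\textbf{Reduction step.} Because $T$ is normal in $G$, there is an equivalence of quotient stacks $[A/G] \simeq [(A/T)/H]$. The finite subgroup $T$ of translations acts freely on $A$, so $B := A/T$ is again an Abelian variety, and the induced action of $H$ on $B$ is by group automorphisms fixing the origin (conjugation by $H$ in $G$ preserves $T$). The smoothness of $A/G$ transfers to that of $B/H$, so it suffices to treat the case $T = 0$, i.e.\ when $G = H$ is a finite subgroup of $\mathrm{Aut}(B,0)$. In this cleaner setting each fixed locus $B^h = \ker(1-h)$ is a finite disjoint union of translates of an Abelian subvariety, the centralizer $C_H(h)$ acts on $B^h$, and each $G$-conjugacy class $[(t,h)]$ corresponds through the isogeny $A \to B$ to a connected component of $B^h/C_H(h)$. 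This correspondence lets one freely translate between an MSOD of $\DD[B/H]$ indexed by $H$-conjugacy classes and the desired MSOD of $\DD[A/G]$ indexed by $G$-conjugacy classes.

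\textbf{Inductive step.} I would argue by induction on $|H|$, with the trivial group as base case. Pick a normal subgroup $K$ of $H$ whose quotient $H/K$ is cyclic of prime order; invoke the Auffarth--Lucchini--Quezada classification to guarantee that the coarse-moduli smoothness hypothesis propagates to $B/K$ and to the residual $(H/K)$-action on it. By induction, $\DD[B/K]$ admits an MSOD indexed by $K$-conjugacy classes. Applying the cyclic quotient result \cite{kuznetsov-perry-cyclic} to the $(H/K)$-action on $[B/K]$ and splicing in the inductive MSOD yields an SOD of $\DD[B/H] \simeq \DD[[B/K]/(H/K)]$. A bookkeeping argument, based on the fact that a conjugacy class in $H$ is determined by its image in $H/K$ together with a $C_{H/K}$-equivariant datum in the fiber, matches the resulting pieces with the expected $\DD(\bar{B}^h)$ for $[h] \in H/\sim$.

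\textbf{Main obstacle.} The principal technical difficulty is that the cyclic SOD of \cite{kuznetsov-perry-cyclic} is formulated for cyclic groups acting on smooth varieties, whereas the inductive step requires applying the analogous construction to a cyclic group acting on the stack $[B/K]$, which only carries an MSOD rather than a scheme structure. One expects to need a compatibility result showing that the mutations used in the cyclic decomposition can be performed componentwise on the inductive MSOD pieces without breaking semiorthogonality, and that the corresponding Fourier--Mukai kernels descend well to the fixed loci $B^{kh}$ inside $B^h$. A secondary combinatorial hurdle is tracking fixed loci and centralizers through the tower $K \triangleleft H$, but this should be manageable by direct computation, perhaps restricted to the short list of cases produced by the classification.
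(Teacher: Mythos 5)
Your reduction step (quotient out the free translation subgroup and pass to $H$ acting on $B = A/T$ fixing the origin) is exactly the paper's Section 5.3, via Lemma \ref{lem:semi-direct-free} and Theorem \ref{thm:msod}. After that, however, the two arguments diverge completely. The paper does not induct on the order of $H$: it instead splits into the positive- and zero-dimensional fixed-locus cases. For $\dim(B^H) > 0$ it uses a Prym-type decomposition $\tilde A = P_G \times A_0$ and descends the MSOD along a free $\Delta$-quotient (Theorem \ref{thm:pos-dim-fixed}) -- your outline has no analogue of this step. For $\dim(B^H)=0$ it invokes Theorem \ref{thm:direct-prod-decomp} to reduce to irreducible actions, then uses the Auffarth--Lucchini--Quezada list to enumerate the only three possibilities (A, B, C) and handles each one directly: type A via \cite{pvdb-equivariant}, type B via the divisorial criterion of \cite{L-P-divisor}, and type C via the explicit construction of Sections \ref{sec:msodA2}--\ref{sec:type-c}.

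Your proposed inductive step has a fatal gap beyond the one you flagged. You need, at each stage, a normal subgroup $K \triangleleft H$ of prime index with $B/K$ still smooth, but no such $K$ is guaranteed to exist. Concretely, in type B the group is $H = S_{n+1}$ acting on $B = \{x \in E^{n+1} : \sum x_i = 0\}$, and for $n \geq 4$ the only proper nontrivial normal subgroup is $A_{n+1}$; but $A_{n+1}$ contains no transpositions, hence no pseudo-reflections for the standard representation, so $\mathbb{A}^n/A_{n+1}$ is singular at the origin by Chevalley--Shephard--Todd and $B/A_{n+1}$ is singular as well. The induction cannot even begin. (Smoothness of $B/H$ does not propagate to quotients by normal subgroups, and the ALQ classification says nothing about intermediate quotients.) The second obstacle you name -- applying the cyclic SOD of \cite{kuznetsov-perry-cyclic} over a stacky base $[B/K]$ rather than a smooth variety -- is not a compatibility lemma one "expects"; it is essentially the whole content of the conjecture in that intermediate case, so assuming it amounts to assuming what you want to prove. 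The classification-driven, case-by-case route of the paper is precisely what avoids both of these problems.
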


\begin{remark}
We did not have to add the hypothesis that \(A^\lambda/C_G(\lambda)\) is
smooth for all \(\lambda\). This is because it is automatically satisfied
provided \(A/G\) is smooth by the classification results in \cite{ga-smooth1, ga-smooth2} which we review in Section \ref{sec:prelims}.
\end{remark}

As an application, we get another proof of the orthogonal decomposition of the
noncommutative motive for \([A/G]\), see \cite[Theorem 1.24, Theorem 1.27]{tvdb-orbifold}. 

\begin{corollary}
  Let \(A\) be an Abelian variety and \(G=T\rtimes H\) a finite group of automorphisms of
  \(A\) such that the coarse moduli space \(A/G\) is smooth. Let \(U\) be
the universal additive invariant. Then the motive
  \(U([A/G])\) decomposes orthogonally:
  \[
    U([A/G])\cong \bigoplus_{\lambda\in G/\sim}U(\overline{A^\lambda}),
  \]
  where \(\overline{A^\lambda} = A^\lambda/C(\lambda)\).
\end{corollary}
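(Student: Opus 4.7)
The plan is to deduce the corollary directly from Theorem \ref{thm:main-msod} combined with the formal behavior of additive invariants of dg-categories. By the main theorem, under the hypotheses of the corollary there is a motivic semiorthogonal decomposition of $\DD[A/G]$ indexed by conjugacy classes $[\lambda]\in G/\sim$, whose components $C_{[\lambda]}$ satisfy $C_{[\lambda]}\cong \DD(\bar{A}^\lambda)$. Nothing beyond this statement needs to be extracted about the categorical decomposition.

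The key formal fact I would then invoke is that any additive invariant of dg-categories converts a semiorthogonal decomposition into a direct sum decomposition in its target additive category; this is essentially the defining property of \emph{additive} invariant in the sense of \cite{tvdb-orbifold, toen-motivedm}, and it applies in particular to the universal additive invariant $U$. Applying $U$ to the semiorthogonal decomposition supplied by Theorem \ref{thm:main-msod} and using the equivalences $C_{[\lambda]}\cong \DD(\bar{A}^\lambda)$ immediately yields
\[
U([A/G]) \cong \bigoplus_{\lambda\in G/\sim} U(\bar{A}^\lambda),
\]
which is the claimed orthogonal decomposition.

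The main obstacle — in the sense of the only substantive point — is entirely absorbed into Theorem \ref{thm:main-msod}: one needs the components of the SOD to be genuinely equivalent (as dg-categories) to $\DD(\bar{A}^\lambda)$, not merely to have matching Hochschild invariants. This is precisely what the adjective \emph{motivic} encodes, so once the main theorem is available the corollary is a formal consequence. In particular, no further hypothesis on the smoothness of the individual $\bar{A}^\lambda$ must be imposed, since the remark following Theorem \ref{thm:main-msod} guarantees it holds automatically.
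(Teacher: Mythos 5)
Your proof is correct and matches the paper's intent: the corollary appears in the paper with no explicit proof because it follows formally from Theorem~\ref{thm:main-msod} together with the defining property of additive invariants (that semiorthogonal decompositions are sent to direct sums), which is exactly the argument you give.
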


\subsection{Outline of Proof}

We rely on the recent work in \cite{ga-smooth1, ga-smooth2}
characterizing smooth quotients of Abelian varieties \(A\) by finite groups. In
particular, there are only finitely many cases where \(A/G\)
is smooth and $T_0(A)$ is an irreducible representation of $G$ (see Section \ref{ssec:quotientsOfAV}). With this classification, we prove Theorem
\ref{thm:main-msod} by using the known orbifold semiorthogonal decomposition for products of curves, by applying the results of \cite{L-P-divisor}, and by constructing a new decomposition for the stacky surface \([E\times E/G(4,2,2)]\), where
\(E=\mathbb{C}/\mathbb{Z}[i]\) is the elliptic curve with automorphism group
\(\mu_4\).

\subsection{Outline of Paper}

Section \ref{sec:prelims} is a preliminary section. Section \ref{sec:msodA2} is devoted to the construction of
an orbifold semiorthogonal decomposition for
\(\mathcal{D}[\mathbb{A}^2/G(4,2,2)]\). This is a local model of the exceptional
case in the classification of smooth quotients of Abelian varieties. The proof
of the global exceptional case is in Section \ref{sec:type-c}. In Section
\ref{sec:msod-abelian}, we complete the proof
of Theorem \ref{thm:main-msod}. Lastly, in Section \ref{sec:examples} we explicitly compute
orbifold semiorthogonal decompositions for Abelian surfaces in the irreducible cases.

\subsection{Conventions and notation}

We work over \(\mathbb{C}\). Unless otherwise stated, all functors are assumed
to be derived. For \(X\) a scheme or stack, by \(\DD(X)\) we mean the bounded
derived category of coherent sheaves on \(X\).

\subsection{Acknowledgements}

The first author would like to thank Robert Auffarth, Aaron Bertram, Giancarlo
Lucchini Arteche, and Alexander Polishchuk for helpful conversations. This
research was part of the RTG grant \#1246989 at the University of Utah.

\section{Preliminaries}
\label{sec:prelims}

We recall preliminaries on equivariant derived categories, orbifold
semiorthogonal decompositions, and smooth quotients of Abelian varieties.
Throughout \(X\) is a smooth quasi-projective variety and \(G\) is a finite
group acting on \(X\). In particular, \(\overline{X} = X/G\) is a quasi-projective
variety. We refer to \cite[Section 4]{bkr-01} for a review of \(G\)-equivariant derived categories and \cite{kuznetsov-survey} for a survey of semiorthogonal decompositions in algebraic geometry.

\subsection{Equivariant derived categories}

For computational purposes it is often convenient to work with the
\(G\)-equivariant derived category of \(X\) instead of the derived category of
\([X/G]\). These are equivalent: \(\mathcal{D}[X/G]\cong \mathcal{D}_G(X)\).

The structure map \(\pi\colon X\to \mathrm{pt}\) is trivially \(G\)-equivariant
and therefore descends to the stack quotients: \(\pi\colon [X/G]\to BG\). Here
\(BG = [\mathrm{pt}/G]\) is the classifying space of \(G\)-bundles. Any finite-dimensional representation \(V\) of \(G\) determines a coherent sheaf on \(BG\)
and hence an object in \(\mathcal{D}(BG)\). For any object
\(\FF\in\mathcal{D}_G(X)\) we therefore have an operation of tensoring by finite-dimensional representations:
\[
  \FF\otimes V:= \FF\otimes\pi^\ast(V).
\]

For any two \(G\)-equivariant sheaves on \(X\), say
\(\EE,\FF\in\mathcal{D}_G(X)\), the set of morphisms is a complex of \(G\)-representations
\[
  \RHom(\EE,\FF)\in\mathcal{D}(BG)
\]
and we have
\[
  \mathrm{Ext}^\ast_{[X/G]}(\EE,\FF)\cong \left( H^\ast\RHom(\EE,\FF)
  \right)^G.
\]

\subsection{Orbifold semiorthogonal decompositions}

\begin{definition}
  Let \(\TT\) be a triangulated category. A semiorthogonal decomposition of
  \(\TT\) is a pair \(\mathcal{A,B}\) of full
  triangulated subcategories of \(\TT\) such that
  \begin{itemize}
    \item \(\mathrm{Hom}_{\TT}(b,a) = 0\) for all \(a\in\mathcal{A},b\in\BB\);
    \item for all \(t\in T\), there exists \(a_t\in \mathcal{A}\) and \(b_t\in \BB\)
      such that
      \[
        b_t\to t\to a_t\to b_t[1]
      \]
      is an exact triangle.
  \end{itemize}
  In this case, we write \(\TT = \langle \mathcal{A},\BB\rangle\).
  \label{def:sod}
\end{definition}

We can iterate Definition \ref{def:sod} to get semiorthogonal decompositions
with an arbitrary finite number of subcategories:
\[
  \TT = \langle \mathcal{A}_1,\ldots,\mathcal{A}_r\rangle.
\]

As suggested in the introduction, the following definition is motivated by the motivic decomposition \eqref{eq:HHDecomposition}.

\begin{definition}
  Suppose a finite group \(G\) acts effectively on a smooth variety \(X\) with smooth quotients $X^{\lambda_i}/C(\lambda_i)$. An
  {\bf orbifold semiorthogonal decomposition} of \(\DD[X/G]\) is the data of
  \begin{itemize}
    \item a total order \(\lambda_1,\ldots,\lambda_r\) on the set of conjugacy classes
      \(G/\sim\);
    \item admissible embedding functors 
      \[
        \Phi_{\lambda_i}\colon \DD(X^{\lambda_i}/C(\lambda_i))\hookrightarrow
        \DD[X/G]
      \]
      for each \(\lambda_i\in G/\sim\);
    \item a semiorthogonal decomposition
      \[
        \DD[X/G] = \langle \mathcal C_1,\ldots, \mathcal C_r\rangle;
      \]
      where \(\mathcal C_i\) is the image of \(\Phi_{\lambda_i}\).
  \end{itemize}
  \label{def:msod}
\end{definition}

\begin{remark}
    An orbifold semiorthogonal decomposition respects the orbifold structure of $[X/G]$: the quotients $X^{\lambda_i}/C(\lambda_i)$ are precisely the connected components of the inertia variety of $[X/G]$.
\end{remark}

\begin{remark}
In all cases where Conjecture \ref{conj:msodc} has been established, the functors $\Phi_{\lambda_i}$ are \textit{linear} over \(\DD(X/G)\). That is, tensoring with pull-backs from $X/G$ preserves the image of $\Phi_{\lambda_i}$. 
While linearity does not appear as a condition in the Conjecture, it is used to simplify several base change arguments in \cite{L-P-divisor}. We point out, moreover, that the functors defining the decompositions of Theorem \ref{thm:main-msod} are linear (this follows from the definition of the Fourier-Mukai kernels in \S \ref{ssec:FM_functors_A2}).
\end{remark}

\begin{example}
  The simplest example of an orbifold semiorthogonal decomposition is when \(\mu_2\) acts on a smooth variety \(X\) fixing a
  smooth divisor \(Y\) pointwise. Then \(\overline{X} = X/\mu_2\) is smooth. Set
  \(\iota\colon Y\hookrightarrow X\) to be the inclusion, which is \(\mu_2\)-equivariant, and \(\pi \colon X\to
  \overline{X}\) the quotient mapping. Then it is straightforward to check that there is a
  semiorthogonal decomposition
  \[
    \DD[X/\mu_2] = \left\langle \pi^\ast\DD\left(\overline{X}\right),\iota_\ast\DD(Y)\right\rangle.
  \]
  This semiorthogonal decomposition is \(\DD\left(\overline{X}\right)\)-linear and hence defines an orbifold semiorthogonal
  decomposition.

  Since \(\mu_2\) acts trivially on \(\iota_\ast \DD(Y)\), we can {\it tensor by
  irreducible representations} to get different objects. Let \(\chi\colon
  \mu_2\to \mathbb{C}^\ast\) be the unique nontrivial representation. Then for
  any object \(\iota_\ast\FF\in \iota_\ast\DD(Y)\), we can consider the object \(\iota_\ast\FF\otimes\chi\).
  This is still an object in \(\DD[X/\mu_2]\) but no longer in \(\iota_\ast\DD(Y)\). More precisely, the \(G\)-equivariant adjunction formula shows the Serre functor comes with an additional tensoring by
  \(\chi\). Applying Serre duality (which in this context coincides with a \textit{mutation}, see \cite{bondal-kapranov-89}) we have another orbifold semiorthogonal decomposition
  \[
    \DD[X/\mu_2] = \langle \iota_\ast\DD(Y)\otimes\chi,\pi^\ast\DD(\overline{X})\rangle.
  \]

  This example indicates one of the central problems in solving this conjecture. A
  canonical ordering on embeddings doesn't exist: one could have fully-faithful
  embedding functors but they (or any reordering) are not necessarily
  semiorthogonal.
\end{example}

\subsection{Smooth quotients of Abelian varieties}
\label{ssec:quotientsOfAV}

We recall the main results of \cite{ga-smooth1,ga-smooth2}. Let \(G\) be a finite group of automorphisms of an Abelian variety \(A\) such that \(G = T\rtimes H\) with \(T\) a group of translations and \(H\) a group of group homomorphisms.

\begin{theorem}[{\cite[Theorem 2.7]{ga-smooth1}}]
\label{thm:reducible_actions}
  Suppose \(G\) acts without translations and \(\dim(A^G) = 0\)\footnote{If $A^G$ has positive dimension, the theorem does not necessarily hold, but $A/G$ fibers over an abelian variety with smooth fibers satisfying Theorem \ref{thm:reducible_actions} \cite[Prop. 2.9]{ga-smooth1}. In the notation of Section \ref{ssec:pos-dim}, the base of the fibration is $A_0/\Delta$, and the fibers are isomorphic to $P_G/G$.}. If \(G\) acts
  reducibly on \(T_e(A)\), then there exists direct product decompositions
  \(A\cong A_1\times\cdots \times A_k\) and \(G\cong G_1\times\cdots \times
  G_k\) where the action is diagonal and \(G_i\) acts irreducibly on
  \(T_e(A_i)\).
  \label{thm:direct-prod-decomp}
\end{theorem}

By \cite[Lemma 2.4.1]{L-P-divisor}, it suffices to construct an orbifold semiorthogonal decomposition for the factors $[A_i/G_i]$, so we can assume that $G$ acts irreducibly.


\begin{theorem}[{\cite[Theorem 1.1]{ga-smooth1},\cite[Theorem 1.1]{ga-smooth2}}]
\label{thm:ALAQClassification}
  Suppose \(G\) fixes the identity, \(\dim(A^G) = 0\), \(G\) acts
  irreducibly on \(T_e(A)\), and \(A/G\) is smooth. If \(\dim(A)\geq 3\), then
  there exists an elliptic curve \(E\) such that \(A\cong E^n\) and either
  \begin{enumerate}[(A)]
    \item \(G\cong C^n\rtimes S_n\) where \(C\neq 1\) is a finite cyclic group acting
      on \(E\) so that \(C^n\) acts diagonally and \(S_n\) permutes the factors;
      or
    \item \(G\cong S_{n+1}\) and acts on \(A = \{ (x_1,\ldots,x_{n+1})\in
      E^{n+1}\mid x_1+\cdots+x_{n+1} = e\}\) by permutations.
  \end{enumerate}
  If \(\dim(A) = 2\), then both (A) and (B) can happen and there is a third
  case: 
  \begin{enumerate}[(A)]
    \addtocounter{enumi}{2}
    \item \(E\cong \mathbb{C/Z}[i]\) and \(G=G(4,2,2)\cong K\rtimes S_2\), where \(K
      = \{ (i^a,i^b)\mid a+b\equiv_20\}\). 
  \end{enumerate}
  \label{thm:zero-fixed}
\end{theorem}

The case $\dim(A)=0$ is well-known: every pair $(A,G)$ has $\mathbb{P}^1$ as a quotient. We comment on this class of examples in \S \ref{ssec:examples_curves}.

We will refer to the cases of Theorem \ref{thm:ALAQClassification} as irreducible quotients of Types (A), (B), and (C).
Orbifold semiorthogonal decompositions of Type (A) have been constructed in \cite{pvdb-equivariant}.
Types (B) and (C) have not yet been considered. We will construct one for Type (C) in Section
\ref{sec:type-c}. We will show they exist for type (B) in Theorem \ref{thm:fixing-origin}.

\section{orbifold semiorthogonal decomposition for \texorpdfstring{\([\mathbb{A}^2/G(4,2,2)]\)}{A2/G(4,4,2)}}
\label{sec:msodA2}

Before constructing an orbifold semiorthogonal decomposition for Type (C), we start by studying the local case of $G(4,2,2)$ acting naturally on $\mathbb{A}^2$.

\subsection{Representation Theory of \texorpdfstring{\(G(4,2,2)\)}{G(4,2,2)}}
\label{ssec:RepThyG}

We will need the representation theory of \(G(4,2,2)\). We recall this information now. The complex reflection group
\(G(4,2,2)\) is constructed as follows. Fix \(\xi = \sqrt{-1}\) to be a primitive
fourth root of unity, let \(\mu_4^2 = \langle (\xi,1),(1,\xi)\rangle\) and \(K\)
the subgroup given by
\[
  K = \{ (\xi^a,\xi^b)\in\mu_4^2\mid a+b\equiv_20\}.
\]
The symmetric group \(S_2 = \langle \sigma\rangle\) acts on \(K\) by
\[
  \sigma(\xi^a,\xi^b) = (\xi^b,\xi^a)
\]
and we set
\[
 G \coloneqq G(4,2,2) = K\rtimes S_2.
\]
The group \(\mu_4^2\) acts on \(X\coloneqq \mathbb{A}^2\) via matrix multiplication
and $S_2$ acts by permuting coordinates. This defines an action of \(G\) on
\(X\), which is an irreducible two-dimensional representation. We call it the natural action of $G$ and denote it \(V\). 

More explicitly, \( (-1,1,1), (-\xi,\xi,1), (1,1,\sigma)\) are generators for $G$, and the natural action is defined by matrix multiplication with
\begin{equation}
    \label{eq:NaturalAction}
    (-1,1,1)\mapsto \begin{pmatrix}
        -1&0\\0&1
    \end{pmatrix}, \quad (-\xi,\xi,1)\mapsto \begin{pmatrix}
        \xi&0\\0&\xi
    \end{pmatrix},    \quad     (1,1,\sigma)\mapsto \begin{pmatrix}
        0&1\\1&0
    \end{pmatrix}. 
\end{equation}

Note that \(V\) is not self-dual, as \(\chi_V(\xi,\xi,1)=2\xi\) is purely
complex. We have \(X=\mathbb{A}^2\cong \mathrm{Spec}(\mathrm{Sym}(V^\vee))\). Explicitly, we will write $X=\mathrm{Spec}(\mathbb{C}[x,y])$ and the action of $G$ on linear terms is the dual representation
\begin{equation}
    \label{eq:dualRep}
    (\xi^{a},\xi^b,1)\, x = \xi^{-a} x,  \quad 
      (\xi^{a},\xi^b,1)\, y = \xi^{-b} y, \quad       (1,1,\sigma) (x) = y.
\end{equation}

\begin{remark}
The group $G$ is also known in the literature as the \textit{Pauli group}. It is a central product of $C_4$ and $D_4$, and it is the $13^{\mathrm{th}}$ group of order 16 in the small group database \cite{Dok}.
We gave the definition via complex reflection groups, which seems natural from the point of view of equivariant geometry. We point out, however, that the natural action is not the only matrix action of $G$ in this work (see Section \ref{sec:type-c}).
\end{remark}

\begin{proposition}
  There are ten irreducible representations of \(G\). Eight of the
  irreducible representations are one-dimensional and the remaining two are two-dimensional. Moreover, the one-dimensional irreducible representations are 2-torsion.
  \label{prop:irrepsg422}
\end{proposition}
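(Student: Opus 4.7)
The plan leverages $|G| = |K|\cdot|S_2| = 16$ together with the identity $\sum_\rho (\dim\rho)^2 = |G|$ and the bijection between irreducibles and conjugacy classes. In outline, I would (i) compute the commutator subgroup, deducing eight one-dimensional characters, (ii) show all of them are $2$-torsion, and (iii) use the dimension formula to pin down the remaining two irreducibles as two-dimensional.

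For (i), since $K$ is abelian, an elementary computation in the semidirect product reduces every commutator $[g_1, g_2]$ to one of the form $k\,\sigma(k^{-1})$ for some $k \in K$. For $k = (\xi^a, \xi^b)$ with $a+b$ even one has $k\,\sigma(k^{-1}) = (\xi^{a-b}, \xi^{b-a})$, and since the constraint forces $a - b$ to be even, these exhaust $\{(1,1), (\xi^2,\xi^2)\}$. Hence $|[G,G]| = 2$ and $|G^{\mathrm{ab}}| = 8$, giving exactly eight one-dimensional irreducibles.

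For (ii), I would exhibit generators of $G^{\mathrm{ab}}$ of order at most $2$: take the images of $(\xi,\xi)$, $(\xi^2, 1)$, and $\sigma$. The last two already have order $2$ in $G$, while $(\xi,\xi)^2 = (\xi^2,\xi^2) \in [G,G]$ becomes trivial in the abelianization. So $G^{\mathrm{ab}} \cong (\mathbb{Z}/2)^3$ and every linear character $\chi$ satisfies $\chi^2 = 1$.

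For (iii), the natural representation $V$ is two-dimensional and irreducible (the coordinate axes are the only $K$-invariant lines in $\mathbb{A}^2$ and they are exchanged by $\sigma$). The eight linear characters contribute $8$ to $\sum_\rho(\dim\rho)^2 = 16$, leaving $8$ for the higher-dimensional irreducibles. The only way to write $8$ as a sum of squares of integers $\geq 2$ is $4+4$, so there are exactly two two-dimensional irreducibles and no others. As a consistency check one can directly enumerate $10$ conjugacy classes ($6$ inside $K$ — four singletons on the diagonal and two pairs swapped by $\sigma$ — and $4$ pairs inside the coset $K\sigma$), matching $8 + 2$. The only real bookkeeping is the commutator and conjugacy-class computation; I expect no substantive obstacle.
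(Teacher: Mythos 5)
Your proposal is correct and follows essentially the same strategy as the paper: identify the abelianization $G^{\mathrm{ab}}\cong(\mathbb{Z}/2)^3$, read off the eight $2$-torsion linear characters, and apply $\sum(\dim\rho)^2=16$. The only difference is cosmetic — you compute $[G,G]$ directly from commutators, whereas the paper exhibits the explicit surjection $\rho\colon G\to\mu_2^3$ with kernel $\langle(-1,-1,1)\rangle$; incidentally your route also sidesteps the paper's slip of calling $\langle(-1,-1,1)\rangle$ the ``center'' (the center of $G$ is $\langle(\xi,\xi,1)\rangle$, of order $4$).
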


\begin{proof}
  The element \((-1,-1,1)\) lies in the center of \(G\) and the quotient by it is an elementary Abelian 2-group of order 8:
  \[
    G/\langle (-1,-1,1)\rangle = \mu_2^3.
  \]
  Indeed, define a homomorphism \(\rho\colon G\to \mu_2^3\) by
  \[
    \rho(\xi^a,\xi^b,\sigma^c) = (\xi^{a+b},\xi^{a-b},\mathrm{sgn}(\sigma^c)).
  \]
  It is a homomorphism as \(\xi^{a-b}=\xi^{b-a}\) in this case. The kernel is \(
  (-1,-1,1)\) and it is evidently surjective. It follows that there are eight
  irreducible one-dimensional representations. Since the sum of the squares of
  the dimensions of the irreducible representations must be \(|G| = 16\)
  we have
  \[
    8 + \dim(V_1)^2+\ldots+\dim(V_k)^2 = 16
  \]
  and thus \(k=2\) and \(\dim(V_1) = \dim(V_2) = 2\). These two-dimensional representations are \(V\) and \(V^\vee\).
\end{proof}

Let \(\rho\colon G\to \mu_2^3\) to be the surjective homomorphism from
the proof of Proposition \ref{prop:irrepsg422} and \(\pi_{i+1}\colon
\mu_2^3\to\mathbb{C}^\ast\) the projection onto the \(i\)th factor character.
Then set
\[
  \chi_i = \pi_i\circ \rho\colon G\to\mathbb{C}^\ast.
\]
Explicitly
\begin{align*}
  \chi_2(\xi^a,\xi^b,\sigma^c) &= \xi^{a+b} \\
  \chi_3(\xi^a,\xi^b,\sigma^c) &= \xi^{a-b} \\
  \chi_4(\xi^a,\xi^b,\sigma^c) &= \mathrm{sgn}(\sigma^c). \\
\end{align*}

The characters \(\chi_2,\chi_3,\text{ and }\chi_4\) have order 2 and commute with one another. They generate  the character group of $G$, since they generate a subgroup $(\mu_2)^3$ of the same size.  

Note that we have
\begin{equation}\label{eq:WedgeVCharacter}
  \bigwedge^2V = \chi_2\chi_4,  
\end{equation}
since $\det \lambda = \xi^{a+b} \mathrm{sgn}(\sigma^c)$, for $\lambda$ the matrix associated to $(\xi^a,\xi^b,\sigma^c)$ via the natural action \eqref{eq:NaturalAction}.

\begin{proposition}
  There are isomorphisms of representations
  \begin{align*}
    V\otimes V&\cong \chi_2\chi_4\oplus \chi_2\chi_3\oplus \chi_2\chi_3\chi_4\oplus
    \chi_2\\
    V^\vee&\cong V\otimes\chi_2\chi_4\cong V\otimes\chi_2\chi_3\cong
    V\otimes\chi_2\chi_3\chi_4\cong V\otimes\chi_2 \\
    V\otimes V^\vee&\cong \chi_3\chi_4\oplus \mathbf{1}\oplus \chi_4\oplus
    \chi_3.
  \end{align*}
  \label{prop:tensorsquareV}
\end{proposition}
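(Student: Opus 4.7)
The plan is to reduce everything to one character computation — the decomposition of $V\otimes V^\vee$ — and derive the other two formulas as formal consequences.

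First I would enumerate the ten conjugacy classes of $G$ predicted by Proposition \ref{prop:irrepsg422}. The central elements $(0,0,0)$ and $(2,2,0)$ each give singleton classes; in the remainder of $K$ the swap $\sigma$ identifies $(a,b,0)$ with $(b,a,0)$, contributing four more classes; and a direct conjugation calculation shows the eight elements of the coset $K\sigma$ fall into four classes of size two. Next I would compute $\chi_V$: an element $(a,b,0)$ acts as $\mathrm{diag}(\xi^a,\xi^b)$, while $(a,b,1)$ acts by an anti-diagonal matrix of trace zero, so $\chi_V$ is supported on the four ``scalar'' classes with representatives $(a,a,0)$ for $a\in\{0,1,2,3\}$, on which $\chi_V = 2\xi^a$.

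For $V\otimes V^\vee$, the class function $\chi_V\overline{\chi_V}$ equals $4$ on the four scalar classes and $0$ elsewhere. I would compare this with $\chi_3\chi_4 + \mathbf{1} + \chi_4 + \chi_3$. On the scalar classes $a=b$ and $c=0$, so all four summands take value $1$; on each of the remaining six classes, a direct check shows that either $\chi_3=-1$ or $\chi_4=-1$, and the factorization $\epsilon+1+\delta+\epsilon\delta = (1+\epsilon)(1+\delta)$ then forces the sum to vanish. Matching characters gives the decomposition of $V\otimes V^\vee$.

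The other two identities are formal. For any two-dimensional representation $W$, the pairing $W\otimes W\to\Lambda^2 W$ induces a $G$-equivariant isomorphism $W^\vee \cong W\otimes(\Lambda^2 W)^{-1}$. Since the one-dimensional character $\Lambda^2 V = \chi_2\chi_4$ is $2$-torsion, $V^\vee \cong V\otimes\chi_2\chi_4$. The remaining three isomorphisms in the second line reduce to the claims $V\cong V\otimes\chi_3\cong V\otimes\chi_4$, both immediate since $\chi_V$ is supported where $\chi_3 = \chi_4 = 1$. Finally, tensoring the decomposition of $V\otimes V^\vee$ with $\Lambda^2 V$ yields the decomposition of $V\otimes V \cong (V\otimes V^\vee)\otimes\Lambda^2 V$ into one-dimensional characters.

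The only real opportunity for error is the class-by-class evaluation of $\chi_2,\chi_3,\chi_4$ on the six non-scalar classes, which I would tabulate carefully before comparing characters; beyond this bookkeeping the argument is entirely formal.
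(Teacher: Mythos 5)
Your proof is correct, and it takes a genuinely different route from the paper's. The paper's argument is basis-wise: it exhibits the explicit eigenvectors $e_1\otimes e_1\pm e_2\otimes e_2$ and $e_1\otimes e_2\pm e_2\otimes e_1$ in $V\otimes V$, reads off their weights, then treats $V^\vee$ via the dual basis and derives $V\otimes V^\vee$ from the first two lines. You instead start from a character computation of $V\otimes V^\vee$ (using that $\chi_V$ is supported on the four scalar classes and that $\chi_3\chi_4+\mathbf{1}+\chi_4+\chi_3=(1+\chi_3)(1+\chi_4)$ vanishes off them), obtain $V^\vee\cong V\otimes\Lambda^2V$ from the symplectic pairing $V\otimes V\to\Lambda^2V$, reduce the remaining identities in line two to $V\cong V\otimes\chi_3\cong V\otimes\chi_4$ via the support of $\chi_V$, and recover $V\otimes V$ by tensoring with $\Lambda^2V$. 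The paper's route has the practical advantage of producing explicit weight vectors that are reused downstream (Proposition~\ref{prop:image of points}, the Koszul resolutions in Lemma~\ref{lem:orthogDim1and2}); yours is cleaner and makes the vanishing on non-scalar classes conceptually transparent via the factorization. Two remarks. First, a minor imprecision: you describe only $(1,1,1)$ and $(-1,-1,1)$ as central, but $(\xi,\xi,1)$ and $(-\xi,-\xi,1)$ are central as well (the center is cyclic of order $4$); your class count of ten still comes out right because you recover those two as singleton classes among the $\sigma$-fixed elements of $K$. Second, both your derivation and the paper's own basis computation give $V\otimes V\cong\Lambda^2V\oplus\chi_2\chi_3\oplus\chi_2\chi_3\chi_4\oplus\chi_2$, so the $\chi_2\chi_4$ appearing as the second summand in the printed statement of Proposition~\ref{prop:tensorsquareV} is a typo for $\chi_2\chi_3$ (as printed, since $\Lambda^2V=\chi_2\chi_4$, the same character would appear twice).
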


\begin{proof}
  For the isomorphism on the first line we exhibit an explicit decomposition. Pick the
  standard basis \(e_1,e_2\) for \(V\). Then, for example,
  \[(\xi^a,\xi^b,\sigma^c) (e_1\otimes e_1+e_2\otimes e_2) = \xi^{2a}(e_1\otimes e_1)+ \xi^{2b}(e_2\otimes e_2) = \xi^{2a}(e_1\otimes e_1+e_2\otimes e_2) \]
  because $2a \equiv_4 -2a \equiv_4 2b$ by definition of $K$. Since $\xi^{2a} = \chi_2\chi_3 (\xi^a,\xi^b,\sigma^c)$, we have 
  \[  \chi_2\chi_3 = \mathrm{Span}\{e_1\otimes e_1+e_2\otimes e_2\}  \]
  Arguing similarly, we obtain:
  \begin{align*}
    \chi_2\chi_4 &= \mathrm{Span}\{e_1\otimes e_2-e_2\otimes e_1\} \\
    \chi_2\chi_3\chi_4 &= \mathrm{Span}\{e_1\otimes e_1-e_2\otimes e_2\} \\
    \chi_2 &= \mathrm{Span}\{e_1\otimes e_2+e_2\otimes e_1\}.
  \end{align*}

  The first isomorphism on the second line is a standard identity (using \eqref{eq:WedgeVCharacter}). For the others, recall the dual action described in \eqref{eq:dualRep}.
  Then, suppose for example that $e_1',e_2'$ are a basis of $V\otimes \chi_2\chi_3$ and check 
  \begin{align*}
      (\xi^{a},\xi^b)\, e_1' = \xi^{-a} e_1', & \quad &
      (\xi^{a},\xi^b)\, e_2' = \xi^{-b} e_1', & \quad &
      \sigma (e_1') = e_2'
  \end{align*}
(again working modulo 4), which shows that $(x,y)\mapsto (e_1',e_2')$ induces $V^\vee \cong V\otimes\chi_2\chi_3$. The other isomorphisms are shown similarly.

  The last isomorphism follows from the first two.
\end{proof}

The following is straightforward. We include it for labeling purposes.

\begin{proposition}\label{prop:ClassesG}
  The conjugacy classes of \(G\) are \footnote{Identifying the group $G$ with the Pauli group $\{ \pm \sigma_k, \pm \xi \sigma_k\}_{k=0,1,2,3}$ via the natural action, the reader will readily recognize the 10 conjugacy classes as $\{ \mathrm{Id} \},\{ -\mathrm{Id} \},\{ \xi\mathrm{Id} \},\{ -\xi\mathrm{Id} \}$ and $\{ \pm \sigma_k \}, \{ \pm \xi \sigma_k\}$ for $k=1,2,3$.}:
  \begin{equation*}
  \begin{aligned}
    D_1 &= \{\Id\} & 
    D_2 &= \{ (1,1,\sigma),(-1,-1,\sigma)\} \\
    D_3 &= \{ (1,-1,1),(-1,1,1)\} & 
    D_4 &= \{ (\xi,-\xi,\sigma),(-\xi,\xi,\sigma)\} \\
    D_5 &= \{ (-1,-1,1)\} & 
    D_6 &=\{ (\xi,\xi,1)\} \\
    D_7 &= \{ (\xi,\xi,\sigma),(-\xi,-\xi,\sigma)\} &
    D_8 &= \{ (-\xi,-\xi,1)\} \\
    D_9 &= \{ (1,-1,\sigma),(-1,1,\sigma)\} & 
    D_{10} &= \{(\xi,-\xi,1),(-\xi,\xi,1)\}.
    \end{aligned}
  \end{equation*}
   The centralizers of each conjugacy class are:
  \begin{equation*}
  \begin{aligned}
    C_1 &= G & 
    C_2 &= \langle (\xi,\xi,1),(1,1,\sigma)\rangle\cong \mu_4\times
    \mu_2 \\
    C_3 &= K & 
    C_4 &= \langle (\xi,\xi,1),(\xi^2,1,\sigma)\rangle
    \cong \mu_4\times\mu_2\\
    C_5 &= G & 
    C_6 &= G \\
    C_7 &= \langle (\xi,\xi,1),(1,1,\sigma)\rangle \cong\mu_4\times \mu_2& 
    C_8 &= G \\
    C_9 &= \langle (\xi,\xi,1),(1,\xi^2,\sigma)\rangle \cong\mu_4\times \mu_2&
    C_{10} &= K.
  \end{aligned}
  \end{equation*}
  For all $i=1,...,10$, let $\lambda_i$ denote a representative of $D_i$. Then, the fixed loci $X_i\coloneqq X^{\lambda_i}$ are 
  \begin{equation}
  \begin{aligned}
    X_1 &= X, &
    X_5=...=X_{10}=\{0\},
  \end{aligned}
  \end{equation}
  and $X_2,X_3,X_4$ are lines intersecting at the origin. 
  \label{prop:conjclassesg422}
\end{proposition}

\subsection{Fourier-Mukai kernels}
\label{ssec:FM_functors_A2}

For \(i=1,\ldots,10\), we define Fourier-Mukai kernels by constructing \(H_i\times G\)-invariant subschemes \(Z_i\) of \(  X_i\times X \) as follows. Define
\[
  Z_i = \bigcup_{g\in G}g^\ast\Gamma_i
\]
where \(\Gamma_i\) is the graph of the inclusion \(X_i\hookrightarrow X\),
and we give \(Z_i\) its reduced scheme structure. Then \(Z_i\) is an \(H_i\times G\)-
invariant subscheme and so \(\mathcal{O}_{Z_i}\) can be taken as a kernel for the Fourier-Mukai functor 
\[
    \Phi_i = \Phi_{\mathcal{O}_{Z_i}}\circ\pi_i^\ast\colon\mathcal{D}(\overline{X_i})\to\mathcal{D}[X/G(4,2,2)],
\]
where \(\pi_i\colon X_i\to \overline{X_i}\) is the quotient mapping. Equivalently, let \(\overline{Z}_i\) be the image of \(Z_i\) under the quotient \( (\pi_i,\mathrm{Id})\colon X_i\times X\to \overline{X_i}\times X\), then
\[
	\Phi_i = \Phi_{\mathcal{O}_{\overline{Z}_i}}.
\]
We note that if \(X_i\) is a point, then \(Z_i\) is also a point and the corresponding $\Phi_i$
is simply the pushforward along the inclusion of the origin which is \(G\)-invariant. If \(X_i\) is a line,
then \(Z_i\) is the union of two lines in \(X_i\times X\cong\mathbb{A}^3\) as
the centralizer has index two. If \(X_i = X\), then $\Phi_1=\pi^*$, the pull-back functor from the coarse space.

\smallskip

We proceed to compute the images of points via the functors $\Phi_i$, for $i=1,2,3,4$. We begin by fixing some notation. 

For $p\in \overline{X_i}=X_i/C_i$, we define a subset $L_p\subset X$ as follows: pick a point $q\in \pi_i^{-1}(p)\subset X_i$. Regard $q$ as a point of $X$ under the inclusion $X_i \hookrightarrow X$ and let $L_p$ be the $G$-orbit of $q$. It is straightforward to check that $L_p$ only depends on $p$. 
For example, in the case $i=1$, $L_p$ is precisely the orbit represented by $p \in \overline{X_1}=X/G$.

We will write $0\in \overline{X_i}$ for the image of $0\in X_i$ for all $i=1,...,10$.


\begin{proposition}
    \label{prop:image of points1}
     We have one of the following cases:
    \begin{itemize}
\item $p=0$ and $q=0$. Then 
\[ \Phi_{1}(\mathcal{O}_0) = \mathbb{C}[x,y]/(x^2y^2,x^4+y^4). \]
\item $p$ is a free orbit and $q$ has trivial stabilizer. Then $L_p$ consists of 16 distinct points and 
\[ \Phi_{1}(\mathcal{O}_p)=\mathcal{O}_{L_p}. \]
\item $p$ is not a free orbit and $q$ has a $\mu_2$ stabilizer. In this case the orbit $L_p$ of $p$ contains 8 points, and $\Phi_{1}(\mathcal{O}_p)$ is a sheaf supported on $L_p$ with a nilpotent of order 2 at every point.
\end{itemize}
\end{proposition}
\begin{proof}
The functor $\Phi_{1}$ is pull-back $\pi^*$ from the coarse moduli space. 
The maximal ideal of the origin pulls back to the ideal of $G$-invariant polynomials $(x^2y^2,x^4+y^4)$, and a point $p\neq 0$ pulls back to its orbit. 

If $q$ has trivial stabilizer, then $L_p$ consists of 16 points, all belonging to the locus where the quotient map is a local isomorphism.

If $q$ has a $\mu_2$ stabilizer, then there are 8 points in its orbit, and the pull-back $\pi^*(\mathcal{O}_{\pi(p)})$ is non-reduced of length 2: locally around $q$ the quotient map $X\to \overline{X}$ is given by $\mathbb{C}[u,v^2] \to \mathbb{C}[u,v]$ and $p=\pi(q)$  pulls back to $\mathbb{C}[u,v]/(u,v^2)$.

There are no other possibilities for the stabilizer of a point $0\neq q\in X$: any such $q$ either has trivial stabilizer, or belongs to a translate of $X_i$ for some $i=2,3$, or $4$. In the latter case the stabilizer is $\mu_2$, generated by one of the elements of $D_i$. 
\end{proof}

\begin{proposition}
    \label{prop:image of points234}
    For $i=2,3,4$ and $p=0\in \overline{X_i}$, we have
     \[
     \Phi_{i}(\mathcal{O}_0) = \begin{cases}
       \mathbb{C}[x,y]/(x^2y^2,x^2-y^2) & i = 2\\
       \mathbb{C}[x,y]/(xy,(x+y)^4) & i = 3 \\
       \mathbb{C}[x,y]/(x^2y^2,x^2+y^2) & i = 4.
    \end{cases}
  \]
 For any other point $0\neq p\in \overline{X_i}$, there are four distinct points \(q_1,q_2,q_3,q_4\) in the preimage
  of \(p\). Then
  \[
    L_p = \{q_1,q_2,q_3,q_4,\gamma q_1,\gamma q_2,\gamma q_3,\gamma q_4\}
  \]
  where \(\gamma\) is a nontrivial coset representative of $C_i$ in $G$. In this case, 
  \[
    \Phi_{i}(\mathcal{O}_p) \cong \mathcal{O}_{L_p}.
  \]  
\end{proposition}
\begin{proof}
The second statement ($p\neq 0$) follows since the centralizers have order 8 but do not act faithfully on $X_i$. Hence, the pull-back of \(\mathcal{O}_p\) under the
  quotient map \(X_i\to \overline{X_i}\) is 
  \[
    \pi_i^\ast(\mathcal{O}_p)\cong \bigoplus_{j=1}^4\mathcal{O}_{q_j}.
  \]
  Finally, let \(\gamma\) be a coset representative of the only nontrivial
  coset. Then, when we equivariantize to build $Z_i$, $\gamma$ just translates each of the four points $q_j$ to $\gamma q_j$.

  The first statement is more involved. We will only do the computation for \(i
  =3\) as the other computations are analogous under a linear change of
  coordinates. Pick $\lambda \coloneqq \lambda_3 = (1,-1,1)$. Thus \(X_3\times X\cong
  \mathbb{A}^1_z\times\mathbb{A}^2_{x,y}\) and $\Gamma_\lambda$ has equation $z=x$. 
  We can take \(\rho\coloneqq (1,1,\sigma)\)
  as a nontrivial coset representative for \(C(\lambda)=C_3\) in \(G\). Thus \(Z_\lambda =
  \Gamma_\lambda\cup \rho^\ast\Gamma_\lambda\) with its reduced scheme
  structure. As modules we have
  \[
    H^0(\mathcal{O}_{Z_\lambda})\cong \mathbb{C}[z,x,y]/(z-x-y,xy).
  \]

  Let \(X_3\to \overline{X_3}\) denote the quotient map. Note that
  \(C(\lambda)/(\lambda) = K/(\lambda) \cong \mu_4\), and it acts in the natural way on \(X_3\). In
  particular if \(x\) is the coordinate on \(X_3\), then the quotient map is
  \(x\mapsto x^4\). It follows that for \(0\in\overline{X_3}\cong\mathbb{A}^1\), we
  have
  \[
    \pi_3^\ast(\mathcal{O}_0) = \mathcal{O}_{0,4}
  \]
  where $\mathcal{O}_{0,4}$ is a non-reduced
  zero-dimensional subscheme of length 4 of $X_3$, concentrated at the origin. In other words, the complex \(z^4 \colon \mathcal{O}_{X_3}\rightarrow\mathcal{O}_{X_3}\) is a resolution of $\mathcal{O}_{0,4}$ on $X_3$. Pull back the resolution to $X_3 \times X$. 
  Tensoring with
  \(\mathcal{O}_{Z_\lambda}\) yields
  \[
    \mathcal{O}_{Z_\lambda}\xrightarrow{z^4}\mathcal{O}_{Z_\lambda}.
  \]
  Since the map $z^4$ is injective, this complex is quasi-isomorphic to
  \[
    \mathbb{C}[z,x,y]/(z-x-y,xy,z^4),
  \]
  which pushes forward to \(\mathbb{C}[x,y]/(xy,(x+y)^4)\). 
\end{proof}

\begin{corollary}
Let $R$ denote the regular representation of $G$. Then:
  \begin{align*}
  \Phi_{1}(\mathcal{O}_0) \cong  R.
\end{align*}
Moreover we have the following isomorphisms of representations:
\begin{align*}
    \Phi_{2}(\mathcal{O}_0) \cong & \mathbf{1}\oplus V^\vee\oplus
    V\oplus \chi_2 \oplus \chi_2\chi_3\oplus \chi_3 \\
    \Phi_{3}(\mathcal{O}_0) \cong & \mathbf{1}\oplus V^\vee\oplus
    V\oplus \chi_2\chi_3\oplus \chi_2\chi_3\chi_4\oplus \chi_4 \\
    \Phi_{4}(\mathcal{O}_0) \cong & \mathbf{1}\oplus V^\vee\oplus
    V\oplus \chi_2\oplus \chi_2\chi_3\chi_4 \oplus \chi_3\chi_4
  \end{align*}
  \label{cor:rep-decomp-origins}
\end{corollary}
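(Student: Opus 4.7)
The plan is to handle the first assertion by invoking an ambient general theorem, then treat $i=2,3,4$ by a direct graded character calculation.

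For the first assertion, I would use that $\Phi_1=\pi^{\ast}$ is pullback from the coarse moduli space $\bar X$, and that $G=G(4,2,2)$ acts on $X=\mathbb{A}^2$ as a complex reflection group with invariant subring $\mathbb{C}[x,y]^{G}=\mathbb{C}[x^2y^2,\,x^4+y^4]$. By the Chevalley-Shephard-Todd theorem, $\mathbb{C}[x,y]$ is free of rank $|G|=16$ over $\mathbb{C}[x,y]^{G}$ and the resulting coinvariant algebra $\mathbb{C}[x,y]/(x^2y^2,\,x^4+y^4)$ is isomorphic as a $G$-representation to the regular representation $R$. Combined with Proposition \ref{prop:image of points}(1), this yields $\Phi_{1}(\mathcal{O}_{0}) \cong R$.

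For $i=2,3,4$, each of the three algebras in Proposition \ref{prop:image of points}(2) has dimension $8$ and is cut out by homogeneous relations, so the standard grading of $\mathbb{C}[x,y]$ descends and splits the algebra into graded $G$-subrepresentations of dimensions $1,2,2,2,1$. In every one of these three cases the degree $0$ piece is the trivial representation $\mathbf{1}$; the degree $1$ piece, spanned by $\{x,y\}$, is $V^{\vee}$ by construction; and the degree $3$ piece $\{x^3,y^3\}$ is $V$, since $\xi^{-3}=\xi$ makes the $K$-action on $(x^3,y^3)$ match the one on $V$ while $\sigma$ swaps the two monomials. The degree $2$ piece always splits as a sum of two one-dimensional characters, and the degree $4$ piece is carried by a single surviving monomial; the specific characters that arise depend on the defining relation ($x^2-y^2$, $xy$, or $x^2+y^2$) and would be read off using the formulas for $\chi_2,\chi_3,\chi_4$ from Proposition \ref{prop:irrepsg422}. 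I would carry out this character calculation explicitly for $i=2$ and observe that the cases $i=3,4$ are entirely analogous after a linear change of coordinates that permutes the three types of fixed lines.

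The main obstacle is purely organizational: correctly identifying the two characters in degree $2$ and the character in degree $4$ across the three cases, since the signs and linear combinations in the defining relations determine which products of $\chi_2,\chi_3,\chi_4$ appear. No deeper machinery is needed beyond Chevalley-Shephard-Todd for the first statement and elementary character computations for the remaining three.
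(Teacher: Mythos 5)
Your argument is correct. For $\Phi_1(\mathcal{O}_0)\cong R$ you take a genuinely different and cleaner route: the paper writes out a full $\mathbb{C}$-basis of the $16$-dimensional algebra $\mathbb{C}[x,y]/(x^2y^2,x^4+y^4)$ and matches each $G$-stable span of monomials to an irreducible by hand (and in fact the displayed basis there contains a stray repeated $xy$), whereas your appeal to Chevalley--Shephard--Todd yields the regular representation at once from the single fact that $G(4,2,2)$ is a complex reflection group with basic invariants $x^2y^2$ and $x^4+y^4$; your version is shorter, less error-prone, and transparently generalizes to any $G(m,p,n)$. For $i=2,3,4$ your plan coincides in substance with the paper's: decompose the $8$-dimensional algebra of Proposition~\ref{prop:image of points} by degree into pieces of dimensions $1,2,2,2,1$, identify degrees $0$, $1$, $3$ with $\mathbf{1}$, $V^\vee$, $V$, read off the degree-$2$ characters from $\mathrm{Sym}^2V^\vee$ modulo the defining quadric via Proposition~\ref{prop:tensorsquareV}, and determine the last character from the lone degree-$4$ monomial, doing one case explicitly and reducing the others by a linear change of coordinates. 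The one thing to keep straight in that last step is that the coordinate change (e.g.\ $u=x+y$, $v=x-y$ exchanging the $i=2$ and $i=3$ configurations) only \emph{normalizes} $G(4,2,2)$ rather than lying in it, so it induces a nontrivial permutation of the characters $\chi_2,\chi_3,\chi_4$; tracking that outer action is exactly the bookkeeping you flagged as the main organizational obstacle, and it is what converts one computed case into the other two displayed answers.
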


\begin{proof}
This is straightforward. For example, the 16 monomials spanning  \(\Phi_{1}(\mathcal{O}_0)\) as a \(\mathbb{C}\)-vector space can be grouped in representations as follows:
  \begin{equation*}
  \begin{aligned}
   \mathbb{C}\{1\} & \cong\mathbf{1}, & \mathbb{C}\{x,y,x^5,x^4y\} &\cong (V^\vee)^{\oplus 2},\\
    \mathbb{C}\{x^3,x^2y,xy^2,y^3\} &\cong V^{\oplus 2}, &
     \mathbb{C}\{x^2,y^2\} &\cong \chi_2\chi_3 \oplus \chi_2\chi_3\chi_4,\\
     \mathbb{C}\{x^3y, xy^3\}&\cong \chi_3\oplus \chi_3\chi_4, & 
     \mathbb{C}\{xy\}&\cong \chi_2,\\
      \mathbb{C}\{x^4\}&\cong \chi_4, & \mathbb{C}\{x^5y\}&\cong \chi_2\chi_4.
  \end{aligned}
  \end{equation*}

\end{proof}

\begin{proposition}
  For each \(i=1,\ldots,10\), the Fourier-Mukai functors
  \[
    \Phi_i\colon\mathcal{D}(\overline{X_i})\to\mathcal{D}[X/G(4,2,2)]
  \]
  are fully-faithful.
  \label{prop:ffcomponents}
\end{proposition}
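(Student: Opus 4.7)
The plan is to verify fully faithfulness of each $\Phi_i$ by checking the Hom comparison on a spanning class of skyscrapers, reducing to equivariant Ext computations. We split into three cases by $\dim X^i$.

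For $i = 1$, the functor $\Phi_1 = \pi^*$ is pullback along the coarse moduli map $\pi\colon [X/G]\to \bar X$; since $R\pi_*\mathcal{O}_{[X/G]}\cong \mathcal{O}_{\bar X}$, the projection formula yields $\pi_*\pi^*\cong \mathrm{id}$ and fully faithfulness follows.

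For $i = 5,\ldots,10$, the category $\mathcal{D}(\bar X^i) = \mathcal{D}(\mathrm{pt})$ is generated by $\mathbb{C}$, and $\Phi_i(\mathbb{C})$ is a $G$-equivariant skyscraper at the origin. It suffices to compute $\mathrm{End}^\bullet_{[X/G]}(\Phi_i(\mathbb{C})) = \mathbb{C}$: the Koszul resolution gives $\mathrm{Ext}^\bullet_X(\mathcal{O}_0,\mathcal{O}_0) = \wedge^\bullet V^\vee$, whose $G$-invariants vanish in positive degree since $V$ is irreducible (so $(V^\vee)^G = 0$) and $\wedge^2 V \cong \chi_2\chi_4$ is a nontrivial character by Proposition \ref{prop:tensorsquareV}.

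For $i = 2, 3, 4$, we have $\bar X^i\cong \mathbb{A}^1$ and the skyscrapers $\mathcal{O}_p$ for $p\in\bar X^i$ form a spanning class. For distinct $p,q$, the supports of $\Phi_i(\mathcal{O}_p)$ and $\Phi_i(\mathcal{O}_q)$ in $X$ are disjoint by Proposition \ref{prop:image of points}, so the relevant Ext groups vanish. For $p = q\neq 0$, $\Phi_i(\mathcal{O}_p) = \mathcal{O}_{L_p}$, and $G$ acts transitively on $L_p$ with stabilizer $\langle g_i\rangle$, whence by equivariant Frobenius reciprocity
\[
  \mathrm{End}^\bullet_{[X/G]}(\mathcal{O}_{L_p}) \cong (\wedge^\bullet V^\vee)^{\langle g_i\rangle} \cong \wedge^\bullet T_p^\vee \bar X^i,
\]
matching $\mathrm{End}^\bullet_{\bar X^i}(\mathcal{O}_p)$ since $g_i$ acts as a reflection with $(+1)$-eigenspace $T_0 X^i$.

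The main obstacle is the remaining case $p = q = 0$ for $i = 2,3,4$, where $\Phi_i(\mathcal{O}_0)$ is the explicit non-reduced fat scheme of Proposition \ref{prop:image of points}. Here one writes down an equivariant Koszul-type resolution of $\Phi_i(\mathcal{O}_0)$ and extracts the $G$-invariants of the resulting Ext complex, using the representation-theoretic decompositions of Corollary \ref{cor:rep-decomp-origins} together with the tensor identities of Proposition \ref{prop:tensorsquareV}, with the goal of recovering the graded algebra $\wedge^\bullet T_0^\vee \bar X^i$. The bulk of the technical work is concentrated in this final origin computation.
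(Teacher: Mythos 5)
Your proposal follows the same overall shape as the paper's argument: spanning class of skyscrapers, divided by dimension of $X^i$. The $i=1$ and $i=5,\ldots,10$ cases match. For $i=2,3,4$ and $p\neq 0$, your induction--restriction argument identifying $\mathrm{End}^\bullet_{[X/G]}(\mathcal{O}_{L_p})\cong(\wedge^\bullet V^\vee)^{\langle g_i\rangle}$ is actually cleaner than the paper, which instead computes $\mathrm{Hom}$ and $\mathrm{Ext}^2$ invariants directly by hand; your version also makes the vanishing in degree $2$ transparent ($g_i$ acts on $\wedge^2 V$ by $\det(g_i)=-1$).

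The gap is that the origin case for $i=2,3,4$ --- which you yourself flag as ``the bulk of the technical work'' --- is left as a plan rather than a computation. The Koszul-resolution strategy you describe would work, but you have not extracted the invariants, and this is precisely where the representation theory has to earn its keep: it is not a priori obvious that $\mathrm{Ext}^\bullet_{[X/G]}(\Phi_i(\mathcal{O}_0),\Phi_i(\mathcal{O}_0))$ collapses to the $1,1,0$ pattern of $\mathrm{End}^\bullet_{\mathbb{A}^1}(\mathcal{O}_0)$, since $\Phi_i(\mathcal{O}_0)$ is a length-$8$ fat point with nontrivial $G$-module structure. For comparison, the paper executes this step differently than your plan suggests: it does not run the Koszul resolution at the origin but instead (i) observes that $\mathrm{Hom}$ is one-dimensional because $\Phi_i(\mathcal{O}_0)$ is cyclic with one-dimensional invariants $\Phi_i(\mathcal{O}_0)^G\cong\mathbf{1}$, and (ii) kills $\mathrm{Ext}^2$ by Serre duality, rewriting it as $\mathrm{Hom}(\Phi_i(\mathcal{O}_0),\Phi_i(\mathcal{O}_0)\otimes\Lambda^2V)^G$ and checking via Corollary~\ref{cor:rep-decomp-origins} and $\Lambda^2V=\chi_2\chi_4$ that no invariants appear. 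To finish, you would need to either execute your Koszul plan to the end or adopt this Hom-plus-Serre-duality shortcut; as written, the fully-faithfulness at $p=0$ remains unproven.
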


\begin{proof}
We use the Bondal-Orlov fully-faithfulness criterion\footnote{The criterion applies to the current case of a quasi-projective domain and an arbitrary triangulated target by  combining \cite{LM17} and \cite[Theorem 2.7.1]{Lim21}.}. 
 The functor $\Phi_1$ is pull-back from \(X/G(4,2,2)\), which is
  fully-faithful by projection formula. For the other cases, we apply the Bondal-Orlov fully-faithfulness criterion. If \(i=5,\ldots,10\), $\Phi_i$ is just push-forward along the inclusion $\{0\} \hookrightarrow X$, which
  is fully-faithful since the structure sheaf of the origin is exceptional:
 \begin{equation}
 \label{eq:EndomorphismsO_0}
    \mathrm{Ext}_X^\ast(\mathcal{O}_0,\mathcal{O}_0)^G = \left(\mathbf{1}[0]
    \oplus V[-1]\oplus \BigWedge^2V[-2]\right)^G = \mathbf{1}[0].
 \end{equation}

  The computations for \(i=2,3,4\) are all similar. For notational convenience,
  we will compute the case of \(i=3\). In this case, we have two types of
  points. If \(p\neq 0\), then we look at the orbit of a preimage under the
  quotient map. By Proposition \ref{prop:image of points234}, the orbit is a reduced subscheme of length 8 \(L_p=\bigsqcup_{j=1}^8 p_j\). Moreover,
  we have
  \[
    \mathrm{Ext}_X^\ast(\mathcal{O}_{L_p},\mathcal{O}_{L_p})^G =
    \left(\bigoplus_{j=1}^8\mathrm{Ext}_X^\ast(\mathcal{O}_{p_j},\mathcal{O}_{p_j})\right)^G.
  \]
  Clearly when \(\ast = 0\) the \(G\)-action permutes the hom-sets and so the
  8-dimensional hom-space has only one copy of the trivial representation. If
  \(\ast = 2\), then we have
  \[
    \left(\bigoplus_{j=1}^8T_{p_j}X_i\otimes N_{p_j}X_i\right)^G
  \]
  and since \(X_3\) is codimension 1 it must be that \(\lambda_3\) acts
  nontrivially on the normal bundle. Thus there are no invariants and we have the necessary vanishing.

  For \(p = 0\), recall from Proposition \ref{prop:image of points234} that \(\Gamma(\Phi_{3}(\mathcal{O}_0))\cong
  \mathbb{C}[x,y]/(xy,(x+y)^4)\). Hence,
  \[
    \mathrm{Hom}(\Phi_{3}(\mathcal{O}_0),\Phi_{3}(\mathcal{O}_0))^G\cong
    \mathbb{C}
  \]
  as any map is determined by the image of \( 1\) and \(
  \Phi_{3}(\mathcal{O}_0)^G\cong \mathbf{1}\). For the vanishing of the
  second Ext group we use Serre duality\footnote{As illustrated in \cite[\S 4.3]{bkr-01}, the canonical bundle with its $G$-structure gives a Serre functor in the derived category of compactly supported, $G$-equivariant sheaves on a quasi-projective variety. On an affine variety, these are $G$-equivariant sheaves with finite support. We will always refer to this version of Serre duality in the rest of the paper.}:
  \[
    \mathrm{Ext}^2(\Phi_{3}(\mathcal{O}_0),\Phi_{3}(\mathcal{O}_0))\cong
    \mathrm{Hom}(\Phi_{3}(\mathcal{O}_0),\Phi_{3}(\mathcal{O}_0)\otimes\BigWedge^2V)^\vee
  \]
  and by Corollary \ref{cor:rep-decomp-origins}
  \[
    \Phi_{3}(\mathcal{O}_0)\cong \mathbf{1}\oplus V\oplus V^\vee
    \oplus \chi_2\chi_3\oplus \chi_2\chi_3\chi_4\oplus \chi_4
  \]
  and so \( (\Phi_{3}(\mathcal{O}_0)\otimes\BigWedge^2V)^G = 0\). This
  implies the vanishing of the second Ext group and completes the proof.
\end{proof}

\subsection{Assembling the Orbifold Semiorthogonal Decomposition}
\label{ssec:assembling_MSOD_A2}

In this section we construct an orbifold semiorthogonal decomposition of $[X/G]$ using the functors $\Phi_i$. We start by observing that the positive-dimensional loci have semiorthogonal images:

\begin{lemma}
\begin{enumerate}
\item Let $i=2,3,4$ and $p\in X_i$. We have
  \[ \mathrm{Ext}^\ast(\Phi_{i}(\mathcal{O}_p),\Phi_{1}(\mathcal{O}_p))^G  = 0 \]
  \item For $i,j=2,3,4$, $j\neq i$, 
  \[\mathrm{Ext}^\ast(\Phi_{i}(\mathcal{O}_0),\Phi_{j}(\mathcal{O}_0))^G  = 0. \]
\end{enumerate}
\label{lem:orthogDim1and2}
\end{lemma}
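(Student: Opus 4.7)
The proof separates naturally into two cases, depending on whether the two sheaves share support on a $G$-orbit away from the origin (part (1) with $p \neq 0$) or are all supported at the origin (part (1) with $p = 0$, and all of part (2)).

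For the off-origin case, I would exploit the fact that $\Phi_i(\mathcal{O}_p) \cong \mathcal{O}_{L_p}$ and $\Phi_1(\mathcal{O}_p)$ are both supported on the same 8-point $G$-orbit $L_p$, each with stabilizer $\langle \lambda_i \rangle \cong \mu_2$. Choosing local coordinates $(u,v)$ at a point of $L_p$ adapted to $X^i$ (so $\lambda_i$ fixes $u$ and negates $v$), the two sheaves become $R/(u,v)$ and $R/(u,v^2)$ respectively. A direct Koszul computation shows that each of $\mathrm{Ext}^0, \mathrm{Ext}^1, \mathrm{Ext}^2$ is a sum of copies of the sign character of $\mu_2$, essentially because the normal direction contributes a sign twist to every Ext. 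The global Ext groups are then $\mathrm{Ind}_{\mu_2}^G$ of this local data, so by Frobenius reciprocity $(\mathrm{Ext}^\ast)^G = 0$.

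For the at-origin case, I would present each $A_k := \Phi_k(\mathcal{O}_0)$ as a $G$-equivariant complete intersection $R/(f_1^{(k)}, f_2^{(k)})$, identifying generators with explicit characters (for instance $x^2-y^2 \in \chi_2\chi_3\chi_4$, $xy \in \chi_2$, $x^2+y^2 \in \chi_2\chi_3$, and $x^4+y^4 \in \mathbf{1}$). Applying $\mathrm{Hom}(-, A_j)$ to the $G$-equivariant Koszul resolution of $A_i$ produces a three-term complex of character-twisted copies of $A_j$; since $x^4+y^4$ acts as zero on each $A_j$ for $j \geq 1$, the differentials simplify to multiplication by $f_1^{(i)}$, yielding $\mathrm{Ext}^0 = \ker(f_1^{(i)}\cdot)$ and $\mathrm{Ext}^2 = (A_j/f_1^{(i)} A_j) \otimes \chi_{f_1^{(i)}}$, while $\mathrm{Ext}^1$ is pinned down by the Euler-characteristic identity $[\mathrm{Ext}^0]-[\mathrm{Ext}^1]+[\mathrm{Ext}^2] = 0$ in the representation ring of $G$. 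Using Corollary \ref{cor:rep-decomp-origins} to decompose each $A_j$ and Proposition \ref{prop:tensorsquareV} for the relevant tensor products (notably $V^\vee \otimes \chi \cong V$ for $\chi \in \{\chi_2, \chi_2\chi_3, \chi_2\chi_3\chi_4\}$), one checks in each of the finitely many pairs that the trivial representation $\mathbf{1}$ does not appear as a summand, and hence $(\mathrm{Ext}^\ast)^G = 0$.

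The main obstacle is the careful bookkeeping of character twists across the equivariant Koszul resolution for several pairs $(i,j)$. As a shortcut, Grothendieck local duality on $\mathbb{A}^2$ with $\omega \cong \Lambda^2 V^\vee \cong \chi_2\chi_4$ provides an isomorphism $\mathrm{Ext}^2(A_i, A_j) \cong \mathrm{Hom}(A_j, A_i \otimes \chi_2\chi_4)^\vee$, so that only the $\mathrm{Hom}$ groups need be computed directly.
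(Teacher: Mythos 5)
Your proposal is correct, and both parts would go through, but it takes a somewhat heavier route than the paper for part (1). The paper handles part (1) in one stroke and uniformly in $p$: by Serre duality $\mathrm{Ext}^\ast(\Phi_i(\mathcal{O}_p),\pi^\ast\mathcal{O}_p)\cong\mathrm{Ext}^{2-\ast}(\pi^\ast\mathcal{O}_p,\Phi_i(\mathcal{O}_p)\otimes\Lambda^2V)^\vee$, then by the adjunction $\pi^\ast\dashv\pi_\ast$ this becomes $\mathrm{Ext}^{2-\ast}(\mathcal{O}_p,\pi_\ast(\Phi_i(\mathcal{O}_p)\otimes\Lambda^2V))^\vee$, which vanishes because $\Phi_i(\mathcal{O}_p)\otimes\Lambda^2V$ has no $G$-invariant sections (read off from Corollary~\ref{cor:rep-decomp-origins} at the origin, and from Proposition~\ref{prop:image of points} plus the fact that $\chi_2\chi_4$ restricts to the sign character of the $\mu_2$ stabilizer off the origin). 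Your proposal instead splits $p=0$ from $p\neq 0$: off-origin you reduce to a local $\mu_2$-computation with $R/(u,v)$ versus $R/(u,v^2)$ and then induce up, and at the origin you resolve by an equivariant Koszul complex. This is all valid --- your local computation correctly produces sign-isotypic $\mathrm{Ext}$ groups, and your origin computation is essentially the same Koszul chase as the paper's part (2) --- but the case split and the explicit Koszul resolution against $\Phi_1(\mathcal{O}_0)$ (the 16-dimensional regular representation) amount to more work than the adjunction trick buys you. Your ``shortcut'' via local duality with $\omega\cong\chi_2\chi_4$ is in fact the same idea the paper applies globally. For part (2), your approach matches the paper's (equivariant Koszul resolution, check no invariant cohomology); the only cosmetic difference is that you uniformly use $x^4+y^4$ as the second generator of each complete intersection, whereas the paper uses $x^2y^2$ --- both vanish on every $\Phi_j(\mathcal{O}_0)$, so the simplification of the differentials works either way.
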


\begin{proof}
Recall that $\Phi_{1}=\pi^*$, and observe
\begin{align*}
\mathrm{Ext}^{2-\bullet}(\Phi_{i}(\mathcal{O}_p),\pi^*(\mathcal{O}_p)) &\cong \mathrm{Ext}^\bullet(\pi^*(\mathcal{O}_p),\Phi_{i}(\mathcal{O}_p)\otimes \BigWedge^2V)^\vee \\
&\cong \mathrm{Ext}^\bullet(\mathcal{O}_p,\pi_*(\Phi_{i}(\mathcal{O}_p)\otimes \BigWedge^2V))^\vee \\
&=0,
\end{align*}
by Serre duality, and where the last equality follows because  $\Phi_{i}(\mathcal{O}_p)\otimes \BigWedge^2V$ has no invariants (see Corollary \ref{cor:rep-decomp-origins} for $p=0$, and Proposition \ref{prop:image of points234} for $p\neq 0$).

The argument is slightly more involved for the second statement. We only illustrate the case $i=2$ and $j=3$, the others are analogous. A resolution of  $\Phi_{2}(\mathcal{O}_0)$ is the Koszul complex 
\[ \mathbb C[x,y]\otimes \chi_2\chi_3\chi_4 
\xrightarrow{\begin{pmatrix}
y^2-x^2 \\ x^2y^2
\end{pmatrix}}
\begin{array}{c}\mathbb C[x,y]\\
\oplus\\
\mathbb C[x,y]\otimes\chi_2\chi_3\chi_4\end{array}
\xrightarrow{\cdot\begin{pmatrix}
x^2y^2 & x^2-y^2
\end{pmatrix}}
\mathbb C[x,y] \]
where $\chi_2\chi_3\chi_4$ is the weight of $x^2-y^2$.
Applying $\mathrm{Hom}(-,\Phi_{3}(\mathcal{O}_0))$ yields 
\begin{equation}
\label{eq:koszul1Dim}
\Phi_{3}(\mathcal{O}_0)
\xrightarrow{\begin{pmatrix}
0 \\ y^2-x^2
\end{pmatrix}}
\begin{array}{c}\Phi_{3}(\mathcal{O}_0)\\
\oplus\\
\Phi_{3}(\mathcal{O}_0)\otimes\chi_2\chi_3\chi_4\end{array}
\xrightarrow{\cdot\begin{pmatrix}
x^2-y^2 & 0
\end{pmatrix}}
\Phi_{3}(\mathcal{O}_0)\otimes \chi_2\chi_3\chi_4.
\end{equation}
Recall from Proposition \ref{prop:image of points234} that $\Phi_{3}(\mathcal{O}_0) \cong \mathbb{C}[x,y]/(xy,(x+y)^4)$. The kernel of multiplication by $y^2-x^2$ is the submodule 
\[\left( x^2+y^2 \right)\cong \mathbb{C}\left\lbrace x^2+y^2,x^3,y^3,x^4 \right\rbrace \cong \chi_2\chi_3 \oplus V \oplus \chi_4.\]
The cokernel of multiplication by $x^2-y^2$ is 
\[\mathbb{C}[x,y]/(xy,x^2-y^2) \otimes \chi_2\chi_3\chi_4\cong \chi_2\chi_3\chi_4\oplus V \oplus \chi_4.\]
Observe that the middle cohomology is the sum of the above kernel and cokernel. Then, none of the cohomologies of \eqref{eq:koszul1Dim} have invariant summands.
\end{proof}

Next, we focus on the zero-dimensional loci and compare them to the positive-dimensional ones. We start from this computation:

\begin{lemma}
  We have isomorphisms in \(\mathcal{D}(BG)\):
  \begin{align*}
  \mathrm{RHom}(\mathcal{O}_0,\Phi_{1}(\mathcal{O}_0)) &\cong
    \chi_2\chi_4[0]\oplus \left( \chi_2\chi_4 \right)^{\oplus 2}[-1]\oplus
    \chi_2\chi_4[-2]\\
    \mathrm{RHom}(\mathcal{O}_0,\Phi_{2}(\mathcal{O}_0)) &\cong
    \chi_3[0]\oplus \left( \chi_3\oplus\chi_2\chi_4 \right)[-1]\oplus
    \chi_2\chi_4[-2]\\
    \mathrm{RHom}(\mathcal{O}_0,\Phi_{3}(\mathcal{O}_0))
    &\cong \chi_4[0]\oplus (\chi_4\oplus\chi_2\chi_4)[-1]\oplus
    \chi_2\chi_4[-2] \\
    \mathrm{RHom}(\mathcal{O}_0,\Phi_{4}(\mathcal{O}_0))
    &\cong \chi_3\chi_4[0]\oplus (\chi_3\chi_4\oplus\chi_2\chi_4)[-1]\oplus
    \chi_2\chi_4[-2].
  \end{align*}
  \label{lem:ext-groups}
\end{lemma}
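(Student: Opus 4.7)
The plan is to resolve \(\mathcal{O}_0\) by the equivariant Koszul complex
\[
  0\to \Lambda^2V^\vee\otimes\mathcal{O}_X\to V^\vee\otimes\mathcal{O}_X\to \mathcal{O}_X\to \mathcal{O}_0\to 0
\]
on \(X=\mathbb{A}^2\) and apply \(\mathrm{Hom}(-,\Phi_i(\mathcal{O}_0))\) for each \(i=1,2,3,4\). Writing \(M_i=\Phi_i(\mathcal{O}_0)\), this produces the three-term \(G\)-equivariant complex
\[
  M_i\xrightarrow{d^0} V\otimes M_i\xrightarrow{d^1}\Lambda^2V\otimes M_i
\]
in degrees \(0,1,2\), whose differentials are multiplication by the two coordinates. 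Since \(G\) is finite and we work over \(\mathbb{C}\), the category \(\mathcal{D}(BG)\) is semisimple, so the claimed isomorphism is determined by identifying each \(H^j\) as a \(G\)-representation.

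The outer cohomologies are cheap. By Proposition~\ref{prop:image of points}, each \(M_i\cong \mathbb{C}[x,y]/(f_i,g_i)\) is a graded Artinian complete intersection, hence Gorenstein with a one-dimensional socle concentrated in top degree \((\deg f_i-1)+(\deg g_i-1)\). I would locate an explicit generator of \(H^0=\ker d^0\) in each case---namely \(x^5y\) for \(i=1\), \(x^3y\) for \(i=2\) and \(i=4\), and \(x^4\) for \(i=3\)---and read off its \(G\)-character from the standard \(V^\vee\)-action on coordinates and the effect of \(\sigma\) modulo the defining relations of \(M_i\). In every case \((f_i,g_i)\subseteq \mathfrak{m}^2\), so \(M_i/\mathfrak{m}M_i\cong \mathbf{1}\) and hence \(H^2=\Lambda^2V\otimes\mathbf{1}=\chi_2\chi_4\) uniformly.

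The middle cohomology is then forced by an Euler characteristic identity in the representation ring of \(G\):
\[
  [H^0]-[H^1]+[H^2]=(1-[V]+[\Lambda^2V])\cdot[M_i].
\]
The virtual character \(1-V+\Lambda^2V\) equals \(\det(1-g\mid V)\), which vanishes on every \(g\) fixing a nonzero vector of \(V\). Inspecting Proposition~\ref{prop:conjclassesg422}, only the six classes \(D_5,\dots,D_{10}\) act without fixed vectors, and a short character-table check using Corollary~\ref{cor:rep-decomp-origins} shows that \(\chi_{M_i}\) vanishes on each of them; for \(i=1\) this is automatic because \(M_1=R\) is the regular representation. Thus the right-hand side is zero, giving \(H^1\cong H^0\oplus H^2\), and one reads off the four expressions claimed in the statement.

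The main technical step is pinning down the socle generators and in particular the sign they acquire under the transposition \(\sigma\) after imposing the relations of \(M_i\); the Euler characteristic argument then packages everything cleanly. No individual computation is deep, but the bookkeeping across four cases and ten conjugacy classes demands care.
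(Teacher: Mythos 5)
Your proof is correct, but it takes a genuinely different route from the paper. The paper dualizes first---writing $\mathrm{Ext}^\ast(\mathcal{O}_0,\Phi_i(\mathcal{O}_0))\cong\mathrm{Ext}^{2-\ast}(\Phi_i(\mathcal{O}_0),\mathcal{O}_0\otimes\Lambda^2V)^\vee$---and then resolves $\Phi_i(\mathcal{O}_0)$ by the Koszul complex on the two defining relations $(f_i,g_i)$. Since those relations lie in $\mathfrak{m}$, the differentials vanish after restricting to $\mathcal{O}_0$, and all three Ext groups fall out simultaneously from a complex with zero maps. You instead resolve $\mathcal{O}_0$ by the Koszul complex $\Lambda^\bullet V^\vee\otimes\mathcal{O}_X$, apply $\mathrm{Hom}(-,M_i)$, read off $H^0$ as the socle and $H^2$ as $\Lambda^2V\otimes(M_i/\mathfrak{m}M_i)=\chi_2\chi_4$, and recover $H^1$ from the vanishing of the Euler characteristic $(1-[V]+[\Lambda^2V])\cdot[M_i]$. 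Your socle identifications ($x^5y$ for $i=1$ giving $\chi_2\chi_4$; $x^3y$ giving $\chi_3$ for $i=2$ and $\chi_3\chi_4$ for $i=4$ depending on the sign acquired under $\sigma$; $x^4$ giving $\chi_4$ for $i=3$) are correct, and the vanishing of $\chi_{M_i}$ on $D_5,\dots,D_{10}$ checks out against Corollary \ref{cor:rep-decomp-origins} (immediate for $i=1$ since $M_1=R$). The paper's dualization is arguably slicker because it produces all three cohomologies at once without inspecting any differential; your Euler-characteristic device is a clean way to circumvent computing $\ker d^1/\mathrm{im}\,d^0$ directly, at the modest cost of a character-table check across six conjugacy classes. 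Either argument establishes the lemma.
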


\begin{proof}
  We again only compute the case of \(\Phi_3\). It is computationally simpler to use
 Serre duality first:
  \begin{align*}
    \mathrm{Ext}^\ast_{[X/G]}(\mathcal{O}_0,\Phi_{3}(\mathcal{O}_0))
    &\cong 
    \mathrm{Ext}^{2-\ast}_{[X/G]}(\Phi_{3}(\mathcal{O}_0),\mathcal{O}_0\otimes\BigWedge^2V)^\vee.
  \end{align*}
  We resolve \(\Phi_{3}(\mathcal{O}_0)\) (computed in Proposition \ref{prop:image of points234}) by the Koszul complex noting
  that \(xy\) has weight \(\chi_2\):
  \begin{align*}
    \mathrm{Ext}^\ast_{[X/G]}(\Phi_{3}(\mathcal{O}_0),\mathcal{O}_0\otimes\BigWedge^2V)
    &\cong
    \left(H^\ast(\mathcal{O}_0\chi_2\chi_4\to\mathcal{O}_0\chi_2\chi_4\oplus\mathcal{O}_0\chi_4\to
    \mathcal{O}_0\chi_4)^\vee\right)^G
  \end{align*}
  and hence the statement.
\end{proof}

Finally, we study Hom spaces between zero-dimensional loci:

\begin{lemma} 
\label{lem:orthogonalitiesDim0}
Let $\rho, \sigma$ be irreducible representations of $G$. Then, the position $(\rho,\sigma)$ is marked by a \checkmark in the following table if and only if
\[ \mathrm{Ext}^*_{[X/G]}(\mathcal{O}_0 \otimes \rho,\mathcal{O}_0\otimes \sigma )=0. \]
\begin{center}

\begin{tabular}{|l||*{10}{c|}}\hline
\backslashbox{$\rho$}{$\sigma$}
&$\mathbf{1}$&$\chi_2$&$\chi_3$&$\chi_4$&$\chi_2\chi_3$&$\chi_2\chi_4$&$\chi_3\chi_4$&$\chi_2\chi_3\chi_4$&$V$&$V^\vee$
\\\hline\hline
$\mathbf{1}$&&\checkmark&\checkmark&\checkmark&\checkmark&&\checkmark&\checkmark&\checkmark&\\\hline
$\chi_2$&\checkmark&&\checkmark&&\checkmark&\checkmark&\checkmark&\checkmark&&\checkmark\\\hline
$\chi_3$&\checkmark&\checkmark&&\checkmark&\checkmark&\checkmark&\checkmark&&\checkmark&\\\hline
$\chi_4$&\checkmark&&\checkmark&&\checkmark&\checkmark&\checkmark&\checkmark&\checkmark&\\\hline
$\chi_2\chi_3$&\checkmark&\checkmark&\checkmark&\checkmark&&\checkmark&&\checkmark&&\checkmark\\\hline
$\chi_2\chi_4$&&\checkmark&\checkmark&\checkmark&\checkmark&&\checkmark&\checkmark&&\checkmark\\\hline
$\chi_3\chi_4$&\checkmark&\checkmark&\checkmark&\checkmark&&\checkmark&&\checkmark&\checkmark&\\\hline
$\chi_2\chi_3\chi_4$&\checkmark&\checkmark&&\checkmark&\checkmark&\checkmark&\checkmark&&&\checkmark\\\hline
$V$&&\checkmark&&&\checkmark&\checkmark&&\checkmark&&\\\hline
$V^\vee$&\checkmark&&\checkmark&\checkmark&&&\checkmark&&&\\\hline
\end{tabular}
\end{center}  
\end{lemma}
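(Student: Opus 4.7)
The plan is to use the Koszul resolution of the origin to obtain a closed formula for these Ext groups as $G$-representations, and then to check vanishing directly inside the representation ring of $G$. As in the proof of Proposition \ref{prop:ffcomponents}, the Koszul resolution of $\mathcal{O}_0$ on $X = \mathbb{A}^2$ yields the $G$-equivariant isomorphism
\[
  \RHom(\mathcal{O}_0, \mathcal{O}_0) \cong \mathbf{1}[0] \oplus V[-1] \oplus \Lambda^2 V[-2],
\]
so tensoring with $\rho^\vee \otimes \sigma$ and taking $G$-invariants gives
\[
  \mathrm{Ext}^i_{[X/G]}(\mathcal{O}_0 \otimes \rho, \mathcal{O}_0 \otimes \sigma) \cong \left(\Lambda^i V \otimes \rho^\vee \otimes \sigma\right)^G.
\]
Since $\Lambda^2 V = \chi_2\chi_4$ and all one-dimensional characters of $G$ are $2$-torsion by Proposition \ref{prop:irrepsg422}, the three Ext groups vanish simultaneously if and only if $\rho^\vee \otimes \sigma$ contains no direct summand isomorphic to any of $\mathbf{1}$, $V^\vee$, or $\chi_2\chi_4$.

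I would then verify each entry of the table by splitting into three cases according to the dimensions of $\rho$ and $\sigma$. When both are one-dimensional, $\rho^\vee \otimes \sigma$ is a single character, and the criterion collapses to $\sigma \neq \rho$ and $\sigma \neq \rho\chi_2\chi_4$ (the $V^\vee$ condition is vacuous for dimension reasons); this handles the $8 \times 8$ block involving only characters. When $\rho$ is one-dimensional and $\sigma \in \{V, V^\vee\}$ (or vice versa), $\rho^\vee \otimes \sigma$ is two-dimensional and irreducible, equal to either $V$ or $V^\vee$, so the only relevant condition is $\rho^\vee \otimes \sigma \neq V^\vee$. Finally, when both $\rho, \sigma \in \{V, V^\vee\}$, the decompositions of $V \otimes V$, $V \otimes V^\vee$, and $V^\vee \otimes V^\vee$ supplied by Proposition \ref{prop:tensorsquareV}, together with the consequences $V^\vee = V \otimes \chi_2$ and $V \otimes \chi_3 = V \otimes \chi_4 = V$ of that same proposition, immediately exhibit the presence or absence of each relevant summand.

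The argument is essentially mechanical once Proposition \ref{prop:tensorsquareV} is in hand, so I do not expect a genuine obstacle beyond the bookkeeping required by the roughly one hundred entries of the table. I would carry out the three cases in the order above, applying the summand criterion in each case to decide whether to record a checkmark.
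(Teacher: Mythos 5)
Your approach is the same as the paper's: twist the $G$-equivariant isomorphism $\RHom(\mathcal{O}_0,\mathcal{O}_0)\cong\mathbf{1}\oplus V[-1]\oplus\Lambda^2V[-2]$ by $\rho^\vee\otimes\sigma$, take invariants, and read off each table entry using Proposition~\ref{prop:tensorsquareV}. Your reformulation of the vanishing condition as ``$\rho^\vee\otimes\sigma$ contains none of $\mathbf{1}$, $V^\vee$, $\chi_2\chi_4$'' is a correct and slightly more explicit packaging of the same check the paper performs.
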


\begin{proof}
Twisting the \eqref{eq:EndomorphismsO_0} we obtain
 \begin{equation*}
    \mathrm{Ext}^\ast_{[X/G]}(\mathcal{O}_0\otimes \rho,\mathcal{O}_0\otimes \sigma) = \left[(\mathbf{1}[0]
    \oplus V[-1]\oplus \BigWedge^2V[-2])\otimes (\rho^\vee\otimes \sigma)[0]\right]^G.
 \end{equation*}
and one checks which choices of $(\rho,\sigma)$ produce invariant summands, using Proposition \ref{prop:tensorsquareV}. 
\end{proof}

It follows from Lemma \ref{lem:ext-groups} that \(\mathcal{O}_0\), \( \mathcal{O}_0\otimes \chi_2 \), and
\(\mathcal{O}_0\otimes V^\vee\) are in the left orthogonal of the images of the
positive-dimensional loci. In particular, the sheaf \( M \coloneqq\mathbb{C}[x,y]/(x^2+y^2,x^2-y^2)\) is also in the left orthogonal, since as a representation 
\begin{equation}
\label{eq:M_representation}
M\cong \mathbf{1} \oplus V^\vee \oplus \chi_2.
\end{equation}

\begin{proposition}
\label{prop:exc_collection}
The collection
\begin{equation}
\label{eq:exc_collection}
\left(\mathcal{O}_0 \otimes \chi_2,\,
M,\,
\mathcal{O}_0 \otimes \chi_2\chi_3,\,
\mathcal{O}_0 \otimes \chi_2\chi_3\chi_4,\,
\mathcal{O}_0 \otimes V,\,
\mathcal{O}_0  \right)
\end{equation}
is an exceptional collection in $\mathcal{D}[X/G]$.
\end{proposition}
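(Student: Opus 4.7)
The plan is to verify the two defining properties of an exceptional collection: (i) each $E_i$ is exceptional, and (ii) $\mathrm{Ext}^*_{[X/G]}(E_i,E_j)=0$ for $i>j$. Every relevant Ext group is the $G$-invariant part of a representation supported at the origin, so in each case the task reduces to decomposing that representation using Propositions \ref{prop:irrepsg422} and \ref{prop:tensorsquareV} and checking whether the trivial summand appears.

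The items not involving $M$ follow immediately from the existing material. For $\rho$ irreducible, exceptionality of $\OO_0\otimes\rho$ follows from \eqref{eq:EndomorphismsO_0} after tensoring with $\rho^\vee\otimes\rho$: when $\rho$ is one-dimensional the outer factor is trivial, and when $\rho=V$ Proposition \ref{prop:tensorsquareV} gives $V^\vee\otimes V\cong\chi_3\chi_4\oplus\mathbf{1}\oplus\chi_4\oplus\chi_3$, and the rule that $V\otimes\chi$ is $V$ or $V^\vee$ according as $\chi|_K$ is trivial shows neither $V\otimes V^\vee\otimes V$ nor $\Lambda^2V\otimes V^\vee\otimes V$ carries a trivial summand. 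All ten required semiorthogonality relations among skyscrapers are \checkmark entries of the table in Lemma \ref{lem:orthogonalitiesDim0}.

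The substantive work concerns the pair $(E_2,E_1)$, the four pairs $(E_i,E_2)$ with $i\in\{3,4,5,6\}$, and the exceptionality of $M$ itself. The key tool is the equivariant Koszul resolution of $M$ along the regular sequence $(x^2+y^2,x^2-y^2)$; writing $W=\mathbb{C}\langle x^2+y^2,x^2-y^2\rangle$, the proof of Proposition \ref{prop:tensorsquareV} identifies $W\cong\chi_2\chi_3\oplus\chi_2\chi_3\chi_4$ and hence $\Lambda^2W\cong\chi_4$, giving
\[
0\to\OO_X\otimes\chi_4\to\OO_X\otimes W\to\OO_X\to M\to 0.
\]
Since $W\subset\mathfrak{m}_0^2$, the subspace $W$ acts by zero on $M$ and on every $\OO_0\otimes\rho$, so for any such $N$ the differentials in $\RHom(-,N)$ of this resolution vanish and one obtains the $G$-equivariant isomorphism
\[
\mathrm{Ext}^*_X(M,N)\cong N[0]\oplus (W^\vee\otimes N)[-1]\oplus (\chi_4\otimes N)[-2].
\]
Setting $N=M$ (and using \eqref{eq:M_representation}) yields a unique trivial summand in degree zero, proving $M$ is exceptional; $N=\OO_0\otimes\chi_2$ handles the pair $(E_2,E_1)$. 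For the pairs $(E_i,E_2)$ I would compute $\mathrm{Ext}^*_X(\OO_0\otimes\rho,M)=\mathrm{Ext}^*_X(\OO_0,M)\otimes\rho^\vee$ by applying $\RHom(-,M)$ instead to the Koszul resolution of $\OO_0$ along $V^\vee$; here the differentials do not vanish identically, but the cohomology can be pinned down from $H^0=\mathrm{Soc}(M)=\chi_2$, $H^2=\chi_2\chi_4$, and an Euler-characteristic count in the representation ring, forcing $H^1=\chi_3\oplus\chi_3\chi_4$. The main obstacle throughout is purely bookkeeping: keeping straight which one-dimensional characters send $V$ to $V$ versus to $V^\vee$. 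Once this dictionary is in place, each of the finitely many cases is verified by inspection of the resulting list of irreducibles.
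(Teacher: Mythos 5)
Your argument is correct and follows the same Koszul--plus--representation-theory strategy as the paper's own proof: every verification reduces to computing $G$-invariants of explicit complexes of $G$-representations, read off via Propositions \ref{prop:irrepsg422} and \ref{prop:tensorsquareV}. Your organizing observation that the Koszul differentials in $\mathrm{RHom}(-,N)$ vanish identically whenever $N$ is annihilated by $\mathfrak{m}_0^2$ is a clean way to package the computations with $M$ as target, and your explicit treatment of $\mathrm{Ext}^*_{[X/G]}(\mathcal{O}_0\otimes\rho,M)$ via the socle and top of $M$ together with an Euler-characteristic count in the representation ring fills in a step the paper only asserts as ``similar.''

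One dictionary error to flag: the rule you state, that $V\otimes\chi\cong V$ or $V^\vee$ according to whether $\chi|_K$ is trivial, is not correct. Since $[G:K]=2$, the only characters trivial on $K$ are $\mathbf{1}$ and $\chi_4$; yet Proposition \ref{prop:tensorsquareV} also gives $V\otimes\chi_3\cong V$ and $V\otimes\chi_3\chi_4\cong V$. The correct test is triviality on the diagonal $\mu_4=\langle(\xi,\xi,1)\rangle$: one has $V\otimes\chi\cong V$ exactly when $\chi(\xi,\xi,1)=1$, i.e.\ when $\chi\in\{\mathbf{1},\chi_3,\chi_4,\chi_3\chi_4\}$, and $V\otimes\chi\cong V^\vee$ for the four characters involving $\chi_2$. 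In this proof the slip is harmless, since the only place you invoke the rule is to see that $V\otimes V^\vee\otimes V$ and $\Lambda^2 V\otimes V^\vee\otimes V$ carry no trivial summand, and these are sums of two-dimensional irreducibles whichever of $V$, $V^\vee$ each tensor factor turns out to be. But you should use the corrected rule when you carry out the Euler-characteristic computation pinning down $H^1\cong\chi_3\oplus\chi_3\chi_4$, where it genuinely matters.
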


\begin{proof}
First, we observe that all objects are exceptional. The exceptionality of the twists of $\mathcal{O}_0$ by the characters is immediate. For the twist by $V$, it follows from Proposition \ref{prop:tensorsquareV}. We check $M$ now. Its endomorphism algebra is computed by resolving $M$ by a Koszul complex and computing the cohomology of
\[M \xrightarrow{0} 
M \chi_2\chi_3
\oplus  
M \chi_2\chi_3\chi_4
\xrightarrow{0} M \chi_4\]
($\chi_2\chi_3$ and $\chi_2\chi_3\chi_4$ are the weights of $x^2+y^2$, $x^2-y^2$ respectively), so by \eqref{eq:M_representation} it only has one invariant summand in degree 0, i.e. $M$ is exceptional.

We now check orthogonalities. All the conditions not involving $M$ follow from Lemma \ref{lem:orthogonalitiesDim0}, and all the ones involving $M$ are similar: we only compute $\mathrm{Ext}^*(M,\mathcal{O}_0\otimes \chi_2)^G=0$ as an example. Once again, resolve $M$ with a Koszul complex and consider the cohomology of 
\[ \mathcal{O}_0\chi_2 \to \mathcal{O}_0\chi_3 \oplus \mathcal{O}_0\chi_3\chi_4 \to \mathcal{O}_0\chi_2\chi_4,  \]
which has no invariant summands.  
\end{proof}

For $i=1,...,4$, let $\mathcal C_i$ denote the image of $\Phi_i\colon \mathcal{D}(\overline{X_i}) \to \mathcal{D}[X/G]$. 
For $i\geq 5$, define $\mathcal C_i$ to be the image of the twisted functors
 \begin{align*}
    \Phi'_5 &= \Phi_5(-)\otimes \chi_2 
    & \Phi'_8 &= \Phi_8(-)\otimes \chi_2\chi_3\chi_4 \\
    \Phi'_6 &= \Phi_6(-)\otimes M 
    & \Phi'_9 &= \Phi_9(-)\otimes V\\
   \Phi'_7 &= \Phi_7(-)\otimes \chi_2\chi_3 
    & \Phi'_{10} &=\Phi_{10}(-).
  \end{align*}

\begin{theorem}
\label{thm:msod_A2}
There is an orbifold semiorthogonal decomposition
\begin{equation}
\label{eq:msod_A2}
\mathcal{D}[X/G]=\left\langle \mathcal C_1,\mathcal C_2,\mathcal C_3,\mathcal C_4,\mathcal C_5,\mathcal C_6,\mathcal C_7,\mathcal C_8,\mathcal C_9,\mathcal C_{10} \right\rangle.
\end{equation}
\end{theorem}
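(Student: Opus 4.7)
The proof splits into (1) full-faithfulness of each $\Phi_i$ and $\Phi'_i$, (2) the Hom-vanishings dictated by the ordering, and (3) generation. Full-faithfulness of $\Phi_1,\dots,\Phi_{10}$ is Proposition \ref{prop:ffcomponents}. For $\Phi'_5,\Phi'_7,\Phi'_8,\Phi'_9,\Phi'_{10}$ the twists are by characters, which are autoequivalences of $\mathcal{D}[X/G]$, so full-faithfulness is preserved. For $\Phi'_6(-)=\Phi_6(-)\otimes M$ the functor is essentially $\mathcal{F}\mapsto M\otimes_{\mathbb{C}}\mathcal{F}$ from $\mathcal{D}(\mathrm{pt})$, and it is fully faithful precisely because $M$ is exceptional, as shown in the proof of Proposition \ref{prop:exc_collection}.

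The semiorthogonality I would verify in three groups. (a) Among $C_1,\dots,C_4$: all required vanishings are supplied by Lemma \ref{lem:orthogDim1and2}. (b) Among $C_5,\dots,C_{10}$: Proposition \ref{prop:exc_collection} shows that the generators $\mathcal{O}_0\otimes\chi_2,\ M,\ \mathcal{O}_0\otimes\chi_2\chi_3,\ \mathcal{O}_0\otimes\chi_2\chi_3\chi_4,\ \mathcal{O}_0\otimes V,\ \mathcal{O}_0$ form an exceptional collection in exactly the required order. (c) Hom from $C_i$ with $i\geq 5$ into $C_j$ with $j\leq 4$: because every $\Phi_j$ and $\Phi'_i$ is a $\mathcal{D}(X/G)$-linear Fourier--Mukai functor and $\mathcal{D}(\bar{X}^j)$ is generated by the structure sheaves $\mathcal{O}_p$, it suffices to verify $\mathrm{Ext}^\ast(\Phi'_i(\mathcal{O}_0),\Phi_j(\mathcal{O}_p))^G=0$. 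For $p\neq 0$ the supports are disjoint by Proposition \ref{prop:image of points} and the vanishing is automatic; for $p=0$ one reads from Lemma \ref{lem:ext-groups} that the representations appearing in $\mathrm{RHom}(\mathcal{O}_0,\Phi_j(\mathcal{O}_0))$ all lie in $\{\chi_2\chi_4,\chi_3,\chi_4,\chi_3\chi_4\}$, whereas the twists $\chi_2,\chi_2\chi_3,\chi_2\chi_3\chi_4,V,\mathbf{1}$ used for $C_5,C_7,C_8,C_9,C_{10}$ are all outside this set, as are the irreducible constituents $\mathbf{1}, V^\vee, \chi_2$ of $M$. The $C_6$ step is slightly more delicate, since $M$ carries nontrivial scheme structure at the origin, and requires a Koszul-resolution argument of the type already used in Lemma \ref{lem:orthogDim1and2} and Proposition \ref{prop:exc_collection}.

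For generation I would argue by rank: each $\bar{X}^i$ is either a point, an affine line, or the smooth affine surface $X/G$, so $K_0(\bar{X}^i)=\mathbb{Z}$ and the total rank of $K_0(\langle C_1,\dots,C_{10}\rangle)$ is $10$; by Proposition \ref{prop:conjclassesg422} this matches $\mathrm{rk}\,K_0(\mathcal{D}[X/G])=\mathrm{rk}\,R(G)=10$. Each $C_i$ is admissible and any nonzero admissible subcategory of $\mathcal{D}[X/G]$ (the derived category of coherent $G$-equivariant modules on a smooth affine variety) has nonzero $K$-theory, so the right orthogonal of $\langle C_1,\dots,C_{10}\rangle$ vanishes. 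The central obstacle is case (c): the twists chosen for the zero-dimensional pieces are pinned down by the joint constraints of Lemmas \ref{lem:ext-groups} and \ref{lem:orthogonalitiesDim0}, and the replacement of a simple character twist by the sheaf $M$ is precisely what keeps the zero-dimensional block exceptional while remaining left-orthogonal to $C_1,\dots,C_4$.
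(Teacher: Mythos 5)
Your semiorthogonality verification (steps (a)–(c)) follows the same lemmas the paper uses and is essentially correct, though you are a bit terse on two points: for pairs among $C_2,C_3,C_4$ you need the observation that the FM kernels $Z_i$ and $Z_j$ intersect only at the origin before Lemma~\ref{lem:orthogDim1and2}(2) applies, and for the $C_6$ step you flag a ``Koszul-resolution argument'' as necessary, when in fact the clean route (and the one the paper takes) is simply that $M$ has a filtration with graded pieces $\mathcal{O}_0\otimes\mathbf{1}$, $\mathcal{O}_0\otimes V^\vee$, $\mathcal{O}_0\otimes\chi_2$, each already known to be left-orthogonal to $C_1,\dots,C_4$ by Lemma~\ref{lem:ext-groups}; no further resolution is needed.

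The genuine gap is in your generation argument, which diverges sharply from the paper. You compare $\mathrm{rk}\,K_0(\langle C_1,\dots,C_{10}\rangle)=10$ with $\mathrm{rk}\,K_0^G(\mathbb{A}^2)=\mathrm{rk}\,R(G)=10$ and then invoke the claim that ``any nonzero admissible subcategory of $\mathcal{D}[X/G]$ has nonzero $K$-theory.'' That claim is precisely the absence of phantom subcategories, and you give no justification for it; for smooth \emph{projective} varieties it is known to be false in general, and there is no general theorem supplying it for $\mathcal{D}^b_{\mathrm{coh}}[\mathbb{A}^2/G]$. (There is also an unaddressed admissibility issue in the non-proper setting, since $C_1$ and $C_2,C_3,C_4$ contain objects without proper support, though this is likely fixable using finiteness of $\pi$ and properness of the FM kernels over $X^i$.) The paper avoids all of this via a spanning-class argument: it shows that every $\mathcal{O}_p\otimes\rho$ with $p\in X$ and $\rho\in\mathrm{Irrep}(G_p)$ lies in $\langle C_1,\dots,C_{10}\rangle$, by a case analysis on the stabilizer of $p$ (using Corollary~\ref{cor:rep-decomp-origins} at the origin, the direct-sum decomposition of $\pi^*\mathcal{O}_{\pi(p)}$ at free points, and a kernel of a pullback map at $\mu_2$-fixed points). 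That argument is elementary, local, and does not rest on any phantom-freeness assumption; you should replace your $K_0$ argument with it.
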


\begin{proof}
For all $i=1,...,10$, the component $\mathcal C_i$ is equivalent to $\mathcal D(\overline{X_i})$. This is the content of Proposition \ref{prop:ffcomponents} (the generators of $\mathcal C_5,...,\mathcal C_{10}$ are still exceptional objects by Proposition \ref{prop:exc_collection}).

Proposition \ref{prop:exc_collection} also shows that $\mathcal C_5,...,\mathcal C_{10}$  are semiorthogonal. Moreover, it immediately follows from Lemma \ref{lem:ext-groups} that $\mathrm{Ext}^\ast_{[X/G]} (\mathcal C_j,\mathcal C_i)=0$ for $i=1,2,3,4$ and $j=5,...,10$. 
The first statement of Lemma \ref{lem:orthogDim1and2} implies $\mathrm{Ext}^\ast_{[X/G]}(\mathcal C_i,\mathcal C_1)=0$ for $i=2,3,4$. Now let $i,j$ be distinct elements of $\left\lbrace 2,3,4 \right\rbrace$. Then, the Fourier-Mukai kernels $Z^i$ and $Z^j$ only intersect at the origin, therefore the second statement of Lemma \ref{lem:orthogDim1and2} implies the vanishing $\mathrm{Ext}^\ast_{[X/G]}(\mathcal C_i,\mathcal C_j)=0$.

It remains to show that the subcategory $\mathcal{T}\coloneqq\left\langle \mathcal C_1,...,\mathcal C_{10} \right\rangle$ coincides with $\mathcal{D}[X/G]$. A special case of the spanning classes of \cite[Prop. 2.1]{L-P-ffcriterion} is given by 
\[\Omega\coloneqq \left\lbrace \mathcal{O}_p\otimes \rho \;\vert\; p\in X, \rho \in \mathrm{Irrep}(G_p) \right\rbrace\]
(here $G_p$ denotes the stabilizer of $p$ in $G$). 
Then, it suffices to show that $\mathcal T$ contains $\Omega$.
Indeed, if $\Omega \subset \mathcal T$, every object $E\in \mathcal{T}^\perp$ satisfies 
$\mathrm{Hom}(\omega, E)=0$ for all $\omega\in \Omega$, which implies $E=0$ by definition of a spanning class, and therefore $\mathcal{T}^\perp=\left\lbrace 0 \right\rbrace$. 

Recall from the proof of Proposition \ref{prop:image of points1} that $G_p$ can only be $G$ (if $p=0$), $\mu_2$ (if $p\in X_i$ for some $i=2,3,4$) or trivial.
If $p=0$, then $G_p\cong G$, and we recover all irreducible representations as follows: from the exceptional collection \eqref{eq:exc_collection} we get 
\[ \chi_2, \chi_2\chi_3, \chi_2\chi_3\chi_4, V, \mathbf{1}, \mbox{ and } V^\vee\]
(the last one from the composition series of $M$, whose factors are $(\chi_2,V^\vee,\mathbf{1})$). With these, and using Corollary \ref{cor:rep-decomp-origins}, we obtain $\chi_3,\chi_4,\chi_3\chi_4$, and $\chi_2\chi_4$. 


Suppose $p\neq 0$ has a $\mu_2$ stabilizer. On the one hand, the pull-back along $X_i \to \overline{X_i}$ contains $\mathcal{O}_p$ as a direct summand as in the proof of Proposition \ref{prop:image of points234}. On the other hand, the pull-back $\pi^*(\mathcal{O}_{\pi(p)})$ is non-reduced of length 2 at every point of its support by the proof of Proposition \ref{prop:image of points1}. Then, the non-trivial irreducible representation is obtained as the kernel of the map
 \[ \pi^*(\mathcal{O}_{\pi(p)}) \to \mathcal{O}_p. \]
 
 If $G_p=0$, then $\Phi_{1}(\mathcal{O}_{\pi(p)})$ contains $\mathcal{O}_p$ as a direct summand (by Proposition \ref{prop:image of points1}), hence $\mathcal{O}_p\in \mathcal{T}$. \qedhere
\end{proof}

\section{Orbifold Semiorthogonal Decomposition for Type (C)}
\label{sec:type-c}

\subsection{Setup} The construction of the orbifold semiorthogonal decomposition for Type (C) (see Theorem \ref{thm:zero-fixed}) will heavily rely on the local analysis carried out in \S 3. We recall here the construction of Type (C) quotients, following \cite{ga-smooth2}. Let \(\Lambda = \mathbb{Z}[\xi]\) be the Gaussian lattice and \(E\cong
\mathbb{C}/\Lambda\) the corresponding elliptic curve with
\(\mu_4\) automorphism group. The natural action \eqref{eq:NaturalAction} of $G\coloneqq G(4,2,2)$ on $\mathbb{A}^2\simeq \mathbb{C}^2$ preserves the lattice $\Lambda^{\oplus 2}$ and descends to an action on $B\coloneqq E^2$. However, this is \textit{not} the correct action to consider, since the quotient $B/G$ is not smooth\footnote{Consider the point \( (e,t_0) \in B\), where \(e\in E\) is the origin and \(t_0 \in E\) is the only non-trivial \(\xi\)-invariant point. Let $G$ act through the natural action on the universal cover \eqref{eq:NaturalAction}. A direct computation shows that the stabilizer of $(e,t_0)$ is the subgroup $K < G$ (since the coordinates cannot be permuted). $K$ is generated by $(\xi,\xi,1),(-1,1,1)$.
One checks directly that the only elements of $K$ acting by pseudoreflections are $(-1,1,1)$ and $(1,-1,1)$. In particular, $(\xi,\xi,1)$ is not generated by pseudoreflections in $K$ (see \cite[Proposition 3.4]{ga-smooth1}).}. One can instead define another action of $G$ on $E^2$ as follows. Consider another copy of the Abelian surface  \(A \coloneqq E^2\) and an isogeny \(\pi\colon B\to A\) defined by the matrix
\[
  \nu = \begin{pmatrix}
    1 & -1 \\ 0 & \xi-1
  \end{pmatrix}.
\]
Let \(t_0\) denote the only non-trivial \(\xi\)-invariant element of \(E\), then
the kernel of \(\pi\) is
\[
  \Delta \coloneqq \ker(\pi) = \left\langle(t_0,t_0)\right\rangle,
\]
and we have \(B/\Delta\cong A\). Moreover, $G$ acts trivially on $\Delta$, so we have an action of \(\Delta\times G\) on \(B\). This action descends to an action of \(G\) on \(A\) and induces a natural isomorphism \(B/\Delta\times G\cong
A/G\). Explicitly, every element $g$ of $G$ acts on $A$ via the matrix $\nu \lambda \nu^{-1}$ where $\lambda$ is the matrix expression of $g$ acting on $\mathbb{A}^2$ via \eqref{eq:NaturalAction}. In particular, the generators \( (-1,1,1), (-\xi,\xi,1), (1,1,\sigma)\) act on
\(A\), respectively, via the matrices:
\begin{equation}
    \label{eq:matricesABC}
  \alpha = \begin{pmatrix}
    -1 & 1+\xi \\ 0 & 1
  \end{pmatrix},\ \beta = \begin{pmatrix}
    -\xi & \xi-1 \\ 0 & \xi 
  \end{pmatrix},\ \gamma = \begin{pmatrix}
    -1 & 0 \\ \xi-1 & 1
  \end{pmatrix}.
  \end{equation}

\subsection{Fixed loci and stabilizers}
\label{ssec:typeC_fixed_loci}

We use the numbering of Proposition \ref{prop:conjclassesg422} for the conjugacy classes and the centralizers of $G$, and we list representatives of conjugacy classes and fixed loci. By $E[n]$ we denote the set \(\{x\in E\mid nx = 0\}\) of $n$-torsion points.
The conjugacy classes are listed in Proposition \ref{prop:ClassesG}. For convenience we recall them here and write every element in terms of the matrices \eqref{eq:matricesABC}. 
\begin{equation*}
  \begin{aligned}
    D_1 &= \{\Id\} & 
    D_2 &= \{\gamma, -\gamma \} \\
    D_3 &= \{\alpha, -\alpha\} & 
    D_4 &= \{\beta\gamma, -\beta\gamma\} \\
    D_5 &= \{\beta^2\}& 
    D_6 &= \{\alpha\beta\}\\
    D_7 &= \{\alpha\beta\gamma, -\alpha\beta\gamma\} &
    D_8 &= \{-\alpha\beta\} \\
    D_9 &= \{\alpha\gamma,-\alpha\gamma\} & 
    D_{10} &= \{\beta,-\beta\}.
    \end{aligned}
  \end{equation*}

We compute the corresponding fixed loci (we pick the first element listed in every conjugacy class as its representative). For example, 
\begin{align*}
A_3 = A^\alpha  & =  \{(x,y)\in A \mid (x,y)=\alpha(x,y)=(-x +(1+\xi)y, y)\} \\
 & = \{(x,y)\in A \mid 2x=(1+\xi)y \} \\
 & = \{(x,y)\in A \mid x=\frac{1}{2}(1+\xi)y + E[2] \} \\
 & \simeq E[2] \times E.
\end{align*}
Similarly:
\begin{equation*}
\begin{aligned}
  A_1 &= A\\
  A_{2} &= \{(x,y)\in A\mid x = \xi x \}\cong \{e,t_0\}\times  E\\
  A_{3} &= \{(x,y)\in A \mid 2x=(1+\xi)y\}\cong E[2]\times E\\
  A_{4} &= \{(x,y)\in A \mid x=\xi y\}\cong E\\
  A_{5} &= E[2]^2\\
  A_{6} &= ... = A_{10}= \{e,t_0\}^2,
  \end{aligned}
\end{equation*}
where $\{e,t_0\}^2$ is a shorthand for $\{ (e,e),(e,t_0),(t_0,e),(t_0,t_0) \}$.
We point out that the only points of $A$ with non-trivial stabilizers are:
\begin{center}
\begin{tabular}{l | c}
Points & Stabilizer \\\hline
$\{e,t_0\}^2$ & $G$ \\
$E[2]^2\setminus \{e,t_0\}^2$ & $\mu_2\times \mu_2$\\
$(A^2\cup A^3\cup A^4)\setminus E[2]^2$ & $\mu_2$.\\
\end{tabular}
\end{center}

Finally, we compute the quotients $\overline{A^i}$. Observe that the group $G$ acts on $E[2]^2$ with 7 orbits. Four orbits are given by elements of $\{e,t_0\}^2$, which are fixed by $G$. The remaining three orbits (let $a\coloneqq \frac{1}{2}$ and $b\coloneqq \frac{\xi}{2}$) are the sets:
\begin{align*}
    \{e,t_0\}\times \{a,b\}\\
    \{a,b\}\times \{e,t_0\}\\
    \{a,b\}^2.
\end{align*}

This shows immediately that $\overline{A^5}= E[2]^2/G $ consists of the 7 orbits above, and that the quotients 
$$ \overline{A^{6}} = ... = \overline{A^{10}}= \{e,t_0\}^2. $$

We have $\overline{A^{1}}=\mathbb P^2$ by \cite[Theorem 1.1]{ga-smooth2}. It is left to compute the quotients of the one-dimensional loci. The computations are similar for $i=2,3,4$. For example, we have 
$$ \overline{A^3}= \{(x,y)\in A \mid x=t_0y+E[2]\}/\pair{\alpha,\beta}, $$
and one sees that, for any $t\in E[2]$, we have 
\[ \beta \begin{pmatrix}
    t_0y +t \\ y
  \end{pmatrix} = \begin{pmatrix}
    t_0(\xi y) -\xi t \\ \xi y
  \end{pmatrix} \]
In other words, $\beta$ acts by identifying two copies of $E$ (and by negation on each of them) and by the order 4 automorphism on the other two copies: 
$$\overline{A^3}\simeq \mathbb{P}^1 \sqcup \mathbb{P}^1 \sqcup\mathbb{P}^1.$$
Similarly, one sees $\overline{A^2}=\mathbb{P}^1 \sqcup \mathbb{P}^1$ and  $\overline{A^4}=\mathbb{P}^1$.

\subsection{The orbifold semiorthogonal decomposition}

We construct Fourier-Mukai functors as in \S \ref{ssec:FM_functors_A2}, by taking the structure sheaves of (equivariantized) graphs of the inclusions of the fixed loci in $A^i\times A$. 
We denote by $\Phi_i\colon \mathcal{D}(\overline{A^i}) \to \mathcal{D}[A/G]$ the corresponding Fourier-Mukai functors. 
The fixed loci $A^i$ with $i\geq 6$ only contain points that are fixed by $G$. Locally at each of these points, the action of $G$  coincides with that described in Section \ref{sec:msodA2}, so we can modify the $\Phi_i$ by twisting 
 \begin{align*}
    \Phi'_6 &= \Phi_6(-)\otimes M 
    & \Phi'_9 &= \Phi_9(-)\otimes V\\
    \Phi'_7 &= \Phi_7(-)\otimes \chi_2\chi_3 
    & \Phi'_{10} &=\Phi_{10}(-),\\
    \Phi'_8 &= \Phi_8(-)\otimes \chi_2\chi_3\chi_4
    & 
  \end{align*}
  where $M$ is the torsion sheaf supported at $\{e,t_0\}^2$ whose fiber at every point is locally isomorphic to the module $\mathbb{C}[x,y]/(x^2+y^2,x^2-y^2)$ (see \S \ref{ssec:assembling_MSOD_A2}).
  
We then modify $\Phi_5$ by only twisting by a character on points with stabilizer $G$, and defining for $X\in \mathcal{D}(\overline{A^5})$:
\begin{equation*}
\Phi_5'(X)\coloneqq \begin{cases}
\Phi_5(X)\otimes \chi_2 \ \ & \mbox{ if } \ \ \mathrm{Supp}(X)\subset \{ e,t_0 \}^2 \\
\Phi_5(X) \ \ & \mbox{ if } \ \ \mathrm{Supp}(X)\subset \overline{A^5}\setminus\{ e,t_0 \}^2 \end{cases}
\end{equation*}  
and extending additively to $\mathcal{D}(\overline{A^5})$. Then we have:

\begin{theorem}
\label{thm:msod_typeC}
In the case of a smooth quotient $A/G$ of type (C), the functors $\Phi_1,...,\Phi_4,\Phi_5',...,\Phi_{10}'$ give rise to an orbifold semiorthogonal decomposition
\begin{equation}
\label{eq:msod_typeC}
 \mathcal{D}[A/G]=\left\langle \mathcal{D}(\overline{A^1}),...,\mathcal{D}(\overline{A^{10}})\right\rangle. 
\end{equation}
\end{theorem}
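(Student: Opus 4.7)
The plan is to promote the semiorthogonal decomposition of $[\mathbb{A}^2/G]$ constructed in Theorem \ref{thm:msod_A2} to the global setting, exploiting the fact that at every point $p\in A$ with non-trivial stabilizer the formal neighborhood is $G_p$-equivariantly isomorphic to the formal neighborhood of $0\in\mathbb{A}^2$ with its natural $G_p$-action (where $G_p$ is viewed as a subgroup of $G(4,2,2)$). Consequently, all local $\mathrm{Ext}$ computations reduce to ones already performed in Section \ref{sec:msodA2}. The three items to verify are: (i) full faithfulness of each $\Phi_i$ (or $\Phi_i'$); (ii) the semiorthogonality of the images in the prescribed order; (iii) the generation of $\DD[A/G]$.

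For full faithfulness, I would argue pointwise on each $\bar{A}^i$. For $i=1,\ldots,4$, this amounts to checking $\RHom(\Phi_i(\OO_p),\Phi_i(\OO_p))^G\cong \mathbb{C}$. At a generic $p\in\bar{A}^i$ the relevant orbit is reduced and the computation is immediate; at the special points lying over $E[2]^2$ or $\{e,t_0\}^2$ the formal neighborhood matches the local model of Section \ref{sec:msodA2}, so Proposition \ref{prop:ffcomponents} applies. For $i=5,\ldots,10$ the component is generated by twisted skyscrapers on $\{e,t_0\}^2$ (together with twists supported on $E[2]^2\setminus\{e,t_0\}^2$ for $i=5$), and the choice of twist is precisely the one that renders these exceptional by Proposition \ref{prop:exc_collection}.

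For semiorthogonality, I would note that $Z_i\cap Z_j$ (with $i\neq j$) is supported only where the stabilizer jumps, i.e.\ on $E[2]^2$ (stabilizer $\mu_2\times\mu_2$) or on $\{e,t_0\}^2$ (stabilizer $G$). Away from these intersection points the relevant $\mathrm{Ext}$ sheaves have empty support. At the intersection points the local model is $\mathbb{A}^2$, so Lemmas \ref{lem:orthogDim1and2}, \ref{lem:ext-groups}, and Proposition \ref{prop:exc_collection} supply the required vanishings; the twists chosen in the definitions of $\Phi_5',\ldots,\Phi_{10}'$ are exactly those fixed by the ordering in the exceptional collection \eqref{eq:exc_collection}.

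For generation, I adapt the spanning-class argument from Theorem \ref{thm:msod_A2}. Let $\TT$ denote the triangulated subcategory generated by the images. It suffices to show that $\TT$ contains every $\OO_p\otimes\rho$ with $p\in A$ and $\rho\in\mathrm{Irr}(G_p)$. For $G_p$ trivial, $\OO_p$ is a summand of $\Phi_1(\OO_{\pi(p)})$. For $G_p\cong\mu_2$ the point lies on some $\bar{A}^i$ with $i\in\{2,3,4\}$, and the exact sequence argument of Theorem \ref{thm:msod_A2} produces both irreducible representations from $\Phi_i$ together with $\Phi_1$. For $p\in E[2]^2\setminus\{e,t_0\}^2$ with $G_p\cong \mu_2\times\mu_2$, $\OO_p$ arises from $\Phi_5(\OO_{\pi(p)})$, and the three non-trivial characters are produced from the three one-dimensional fixed loci $\bar{A}^2,\bar{A}^3,\bar{A}^4$ passing through $p$. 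Finally, for $p\in\{e,t_0\}^2$ with $G_p=G$ all ten irreducible representations of $G$ are recovered exactly as in Theorem \ref{thm:msod_A2}, using the local identification of the neighborhood of $p$ with the origin in $\mathbb{A}^2$.

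The hardest part will be the bookkeeping at the points of $E[2]^2\setminus\{e,t_0\}^2$, which have stabilizer $\mu_2\times\mu_2$ and were not treated separately in Section \ref{sec:msodA2}. A short local computation is required here to check both exceptionality of $\Phi_5'(\OO_{\pi(p)})$ and its orthogonality against the other components; since $\mu_2\times\mu_2$ acts on $\mathbb{A}^2$ via $(x,y)\mapsto (\pm x,\pm y)$ with smooth quotient, the Koszul-style arguments from Section \ref{sec:msodA2} transfer essentially verbatim. Beyond this isolated case, the entire proof is a careful, but mechanical, globalization of the local model.
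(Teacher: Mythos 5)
Your approach is essentially the paper's: reduce to formal local models around each point $p\in A$, observing that when $G_p\cong G(4,2,2)$ (i.e.\ at points of $\{e,t_0\}^2$) the conclusion follows from Theorem~\ref{thm:msod_A2}, and that the only genuinely new case is $G_p\cong\mu_2\times\mu_2$ (i.e.\ $p\in E[2]^2\setminus\{e,t_0\}^2$), where one matches the local restrictions to the known semiorthogonal decomposition of $\mathcal{D}[\mathbb{A}^2/\mu_2\times\mu_2]$. The paper does exactly this, making the $\mu_2\times\mu_2$ case explicit by computing the local models of $\Phi_1(\mathcal{O}_p)$, $\Phi_2(\mathcal{O}_p)$, $\Phi_5'(\mathcal{O}_p)$ at a sample point.

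There is, however, a factual slip in your bookkeeping at the $\mu_2\times\mu_2$-points that you should correct. You write that for such $p$ ``the three non-trivial characters are produced from the three one-dimensional fixed loci $\bar{A}^2,\bar{A}^3,\bar{A}^4$ passing through $p$.'' This is not what happens: the three $G$-orbits in $E[2]^2\setminus\{e,t_0\}^2$ (namely $\{e,t_0\}\times\{a,b\}$, $\{a,b\}\times\{e,t_0\}$, and $\{a,b\}^2$) are matched one-to-one with the conjugacy classes $D_2$, $D_3$, $D_4$ respectively, so a fixed $p$ lies on translates of only \emph{one} of $A^2,A^3,A^4$, not all three. The stabilizer $\mu_2\times\mu_2=\{1,\lambda,-\lambda,-\Id\}$ has $\lambda$ and $-\lambda$ conjugate in $G$; both one-dimensional local fixed loci through $p$ therefore contribute to the \emph{same} global component $\mathcal{D}(\bar{A}^i)$, corresponding to the local factor $\langle\mathcal{D}(\bar{\mathbb{A}}^1),\mathcal{D}(\bar{\mathbb{A}}^1)\rangle$ of $\mathcal{D}[\mathbb{A}^2/\mu_2\times\mu_2]$. (The paper's example $p=(e,a)\in A^1\cap A^2\cap\alpha_*A^2\cap A^5$ illustrates this.) So locally around $p$, only $\Phi_1$, $\Phi_i$ (for exactly one $i\in\{2,3,4\}$, contributing two summands), and $\Phi_5'$ intervene. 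This does not derail your argument---the four irreducible representations of $G_p$ are still recovered, and one still obtains the full local semiorthogonal decomposition---but the attribution of the characters to three distinct global components is wrong and the local generation check needs to be stated in terms of the two translates of a single $A^i$ rather than of $A^2,A^3,A^4$ together.
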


\begin{proof}
Everything can be checked locally around each point $p\in A$, so it suffices to perform a local computation based on stabilizer types: \(G\), \(\mu_2\times\mu_2\), \(\mu_2\).  

For example, suppose \(p\in A\) satisfies $\mathrm{Stab}_G(p)=G$ and \(x\in \overline{A}_i\) is such that \(\Phi_i(\mathcal{O}_{x})\) is supported at \(p\). Pick a \(G\)-invariant affine open subset \(\mathrm{Spec}(R)\) containing \(p\) and let \(\mathfrak{m}\) be the corresponding maximal ideal. Then
\[
	\mathrm{Ext}^*_{[A/G]}(\Phi_i(\mathcal{O}_{x}),\Phi_i(\mathcal{O}_{x}))
	\cong \mathrm{Ext}^*_R(\Phi_i(\mathcal{O}_{x}),\Phi_i(\mathcal{O}_{x}))^G
\]
Since \(\Phi_i(\mathcal{O}_{x})\) are finite type and supported at \(\mathfrak{m}\), we can pass to the completion
\[
	\mathrm{Ext}^*_R(\Phi_i(\mathcal{O}_{x}),\Phi_i(\mathcal{O}_{x}))^G
	\cong \mathrm{Ext}^*_{\hat{R}}(\Phi_i(\mathcal{O}_{x}),\Phi_i(\mathcal{O}_{x}))^G.
\]
But \(\hat{R}\cong \mathbb{C}[[a,b]]\) since \(A\) is smooth. Since we have the usual identification 
\[
	\mathbb{C}\{a,b\}\cong \mathfrak{m}/\mathfrak{m}^2\cong T_pA,
\]
it follows that \(\mathbb{C}\{a,b\}\) is an irreducible representation of \(G\) and hence isomorphic to \(V\) or \(V^\vee\). Hence, if the stabilizer is isomorphic to \(G\) itself the decomposition \eqref{eq:msod_typeC} coincides locally with the one in  \eqref{eq:msod_A2}, and the statement follows from Theorem \ref{thm:msod_A2}.


Suppose now that $p$ has a $\mu_2\times\mu_2$ stabilizer: the local model around $p$ is $\mathbb{A}^2$ where each copy of $\mu_2$ acts by negating a coordinate, and it has an orbifold semiorthogonal decomposition 
\begin{equation}
\label{eq:sod_m2xm2}
 \mathcal{D}[\mathbb{A}^2/\mu_2\times\mu_2] = \pair{ 
\mathcal{D}(\overline{\mathbb{A}^2}),
\mathcal{D}(\overline{\mathbb{A}^1}),
\mathcal{D}(\overline{\mathbb{A}^1}),
\mathcal{D}(\mathrm{pt})}. 
\end{equation}

When restricted to some neighborhood of $p$, the categories in \eqref{eq:msod_typeC} yield the semiorthogonal decomposition \eqref{eq:sod_m2xm2}. For example, let $p=(e,a)$. We have
\[p \in A^1 \cap A^2 \cap \alpha_*A^2 \cap A^5.\]
Then we have local isomorphisms
\begin{align*}
\Phi_1(\mathcal{O}_p) & \simeq \mathbb{C}[u,v]/(u^2,v^2)\\
\Phi_2(\mathcal{O}_p) & \simeq \mathbb{C}[u,v]/(u^2,uv,v^2)\\
\Phi_5(\mathcal{O}_p) & \simeq \mathbb{C}[u,v]/(u,v),
\end{align*}
which satisfy the correct orthogonalities by \eqref{eq:sod_m2xm2} and generate $\mathcal{D}[A/G]$ at $p$.

The case of stabilizer of type \(\mu_2\) is similar.
\end{proof}

\begin{corollary}
The category $\mathcal{D}[A/G]$ admits a full exceptional collection of length 42. 
\end{corollary}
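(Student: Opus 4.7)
The plan is to combine Theorem \ref{thm:msod_typeC} with the observation that each component $\mathcal{D}(\bar{A}^i)$ of the semiorthogonal decomposition already admits a full exceptional collection. It is a standard fact (see Kuznetsov's survey \cite{kuznetsov-survey}) that if $\mathcal{T}=\langle \mathcal{A}_1,\ldots,\mathcal{A}_r\rangle$ is a semiorthogonal decomposition and each $\mathcal{A}_i$ admits a full exceptional collection $E^{(i)}_1,\ldots,E^{(i)}_{n_i}$, then the concatenation
\[
E^{(1)}_1,\ldots,E^{(1)}_{n_1},E^{(2)}_1,\ldots,E^{(2)}_{n_2},\ldots,E^{(r)}_1,\ldots,E^{(r)}_{n_r}
\]
is a full exceptional collection of $\mathcal{T}$. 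So the task reduces to exhibiting full exceptional collections on the varieties $\bar{A}^i$.

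Inspecting the list of quotients computed in Section \ref{ssec:typeC_fixed_loci}, each $\bar{A}^i$ is of one of the following types: $\bar{A}^1=\mathbb{P}^2$; $\bar{A}^2$ and $\bar{A}^3$ are disjoint unions of copies of $\mathbb{P}^1$; $\bar{A}^4=\mathbb{P}^1$; and $\bar{A}^5,\ldots,\bar{A}^{10}$ are $0$-dimensional (finite collections of reduced points). Each projective space $\mathbb{P}^n$ admits Beilinson's full exceptional collection $\mathcal{O},\mathcal{O}(1),\ldots,\mathcal{O}(n)$; a reduced point admits the trivial full exceptional collection $\{\mathcal{O}_{\mathrm{pt}}\}$; and for any finite disjoint union $Y=Y_1\sqcup\cdots\sqcup Y_m$ of varieties each carrying a full exceptional collection, the concatenation of the pushforwards is again a full exceptional collection of $\mathcal{D}(Y)$, since the $\mathcal{D}(Y_j)$ are mutually orthogonal. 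Hence each $\mathcal{D}(\bar{A}^i)$ admits a full exceptional collection.

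Applying the concatenation principle to the semiorthogonal decomposition of Theorem \ref{thm:msod_typeC} yields a full exceptional collection of $\mathcal{D}[A/G]$. There is no serious obstacle: all nontrivial work was carried out in building the motivic semiorthogonal decomposition itself.
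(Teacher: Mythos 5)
Your proof is correct and takes essentially the same approach as the paper: both combine the semiorthogonal decomposition of Theorem \ref{thm:msod_typeC} with the observation that each $\bar{A}^i$ is a disjoint union of projective spaces and points, hence carries a full exceptional collection. The extra detail you give about concatenating exceptional collections across the pieces is standard and implicit in the paper's one-line proof.
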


\begin{proof}
In fact, each of the pieces $\mathcal{D}(\overline{A^i})$ appearing in \eqref{eq:msod_typeC} admits a full exceptional collection: the $\overline{A^i}$ are computed in \S \ref{ssec:typeC_fixed_loci} and are unions of projective spaces and points. In particular, the two-dimensional quotient contributes 3 exceptional objects, the one-dimensional quotients (6 copies of the projective line) contribute 12 exceptional objects, and there are $7+5\times 4$ irreducible components in the zero-dimensional quotients.
\end{proof}

\section{Orbifold semiorthogonal decompositions for \texorpdfstring{\(\DD[A/G]\)}{D[A/G]}}
\label{sec:msod-abelian}

In this Section we prove Theorem \ref{thm:main-msod}. We will break the proof up into three parts. In \S \ref{ssec:zero-dim}, we prove
the case of zero-dimensional fixed loci where \(G\) acts by group automorphisms.
In \S \ref{ssec:pos-dim}, we prove the case of positive-dimensional fixed loci
where \(G\) acts by group automorphisms. In \S \ref{ssec:trans}, we use the
results of \S \ref{ssec:zero-dim} and \S \ref{ssec:pos-dim} to prove the case
of arbitrary \(G = T \rtimes H\), with $T$ a normal subgroup of translations and $H$ the subgroup of group automorphisms. 

\subsection{Zero-dimensional fixed loci}
\label{ssec:zero-dim}

Suppose \(G\) acts on \(A\) by group automorphisms. We assume \(G\) acts irreducibly on
\(T_e(A)\) and that \(\dim(A^G)=0\). We will need the following result from \cite{L-P-divisor}. We recall it here and show how it applies to the current case.

\begin{theorem}[{\cite[Theorem 1.2.1]{L-P-divisor}}]\label{thm:LPDivisor}
Suppose $G$ is a finite group acting effectively on a smooth quasi-projective variety $X$, and that $\mathcal D[X/G]$ admits an orbifold semiorthogonal decomposition. Let $H \subset X$ be a smooth $G$-invariant divisor. If $H$ satisfies the generality assumption $(\ast)$ below, then $\mathcal D[H/G]$ also admits an orbifold semiorthogonal decomposition.  
\end{theorem}

For each conjugacy class $\lambda$ 
let $W_\lambda$ be the quotient of $C(\lambda)$ acting effectively on $X_\lambda$ and write $X_\lambda^{\fr}$ for the locus where the action is free. Then the assumption is

\smallskip

\begin{displayquote}
$(\ast)$ for every $\lambda$, $H$ does not contain $X_\lambda$ and $H\cap X_\lambda^{\fr}$ is dense in $H\cap X_\lambda$.
\end{displayquote}

\smallskip

Recall quotients of Type (B) from Theorem \ref{thm:ALAQClassification}: the group $G\simeq S_{n+1}$ acts permuting the variables on the product $X = E^{n+1}$ of an elliptic curve $E$ with itself, and $A\subset X$ is the smooth $S_{n+1}$-invariant divisor defined by \(x_1+\cdots +  x_{n+1} = 0\).

\begin{lemma}\label{lem:MSODTypeB}
    If $A$ is as in Type (B) of Theorem \ref{thm:ALAQClassification}, then $\mathcal D[A/S_{n+1}]$ admits an orbifold semiorthogonal decomposition. 
\end{lemma}

\begin{proof}
We check that Theorem \ref{thm:LPDivisor} applies. First of all, $\mathcal D [X/S_{n+1}]$ admits an orbifold semiorthogonal decomposition by \cite[Theorem B]{pvdb-equivariant}. Next, we check condition $(\ast)$. 

Write \(\lambda = (1^{r_1})(2^{r_2})\cdots(k^{r_k})\). Then we can identify \(W_\lambda = C(\lambda)/\left\langle \lambda \right\rangle\cong S_{r_1}\times S_{r_2}\times\cdots\times S_{r_k}\), and 
\[
X_\lambda \cong E^{r_1}\times E^{r_2}\times\cdots E^{r_k}.
\]
Thus, \(A\cap X_\lambda\) is identified with the set of points \(e\in X\) such that
\begin{equation}
e_1+\cdots +e_{r_1}+2e_{r_1+1}+\cdots + 2e_{r_1+r_2}+\cdots + k e_{r_1+\cdots + r_{k-1}+1}+\cdots + ke_{r_1+\cdots + r_k} = 0.
\label{eq:AcapXlambda}    
\end{equation}

On the other hand, \(A\cap X_\lambda^{\fr}\) is the subset of \(A\cap X_\lambda\) lying in the complement of the diagonals in the factors of $X_\lambda$. More explicitly, $e \in A\cap X_\lambda^{\fr}$ if
 \begin{equation*}
     \begin{cases}
         \mbox{all of the } e_1,\ldots,e_{r_1} \mbox{ are distinct, and}\\
         \mbox{all of the }e_{r_1+1},\ldots,e_{r_1 + r_2} \mbox{ are distinct, and}\\
         \ldots\\
         \mbox{all of the }e_{r_1+ \cdots+ r_{k-1}+1},\ldots, e_{r_1+\cdots + r_k} \mbox{ are distinct.}\\
     \end{cases}
 \end{equation*}
 This is clearly open in $A\cap X_\lambda$, and any non-free point of $A\cap X_\lambda$ can be deformed to one of $A\cap X_\lambda^{\fr}$ while satisfying equation \eqref{eq:AcapXlambda}. In other words, $A\cap X_\lambda^{\fr}$ is dense in $A\cap X_\lambda$. We conclude that \(\mathcal D[A/S_{n+1}]\) admits an orbifold semiorthogonal decomposition.
\end{proof}

We can now prove the following theorem. 

\begin{theorem}
  Suppose \(G\) is a finite group of automorphisms of an Abelian variety \(A\)
  that fix the identity. Assume  that \(G\) acts irreducibly on
  \(T_e(A)\) and that \(\dim(A^G)=0\). Then \(\DD[A/G]\) has an orbifold semiorthogonal decomposition.
  \label{thm:fixing-origin}
\end{theorem}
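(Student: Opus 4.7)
The plan is to apply the classification of smooth quotients in Theorem \ref{thm:zero-fixed} to reduce to three explicit families, and then dispatch each case with a separate ingredient. First, if $G$ is trivial the decomposition is tautological, and if $\dim A=1$ then $A$ is an elliptic curve and the desired decomposition exists by \cite[Theorem 1.2]{polishchuk-toric}. So assume $\dim A\geq 2$ and $G$ nontrivial. Because $G$ acts irreducibly and nontrivially on $T_e(A)$, we have $T_e(A)^G=0$, hence $\dim A^G=0$, and Theorem \ref{thm:zero-fixed} applies, placing us in Type (A), (B), or (C).

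Types (A) and (C) can be dispatched by reference. In Type (A) the action is $C^n\rtimes S_n$ on $E^n$, a semidirect-product action on a product of curves, and a motivic semiorthogonal decomposition is produced by \cite[Theorem B]{pvdb-equivariant}. Type (C) occurs only in dimension two and is precisely Theorem \ref{thm:msod_typeC} of the present paper.

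The remaining case, Type (B), is the substantive one: $G=S_{n+1}$ acts by permutations on $A=\{(x_1,\ldots,x_{n+1})\in E^{n+1}:\sum_i x_i=e\}$. The plan is to start from the known motivic semiorthogonal decomposition of $[E^{n+1}/S_{n+1}]$ supplied by \cite[Theorem B]{pvdb-equivariant} and descend it to $[A/S_{n+1}]$ along the $S_{n+1}$-equivariant closed embedding $A\hookrightarrow E^{n+1}$. Conjugacy classes in $S_{n+1}$ correspond to partitions $\lambda\vdash n+1$; the fixed locus $(E^{n+1})^\lambda\cong E^{\ell(\lambda)}$ sits diagonally, one coordinate per cycle, and the intersection $A^\lambda=A\cap(E^{n+1})^\lambda$ is the codimension-one abelian subvariety cut out by the weighted-sum condition determined by the cycle lengths. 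For each $\lambda$ one restricts the fully-faithful embedding $\DD(\overline{(E^{n+1})^\lambda})\hookrightarrow\DD[E^{n+1}/S_{n+1}]$ to the divisor $A$ and verifies that the restricted functor yields a fully-faithful embedding of $\DD(\bar A^\lambda)$ into $\DD[A/S_{n+1}]$, with the restricted components remaining semiorthogonal in the inherited order. The main obstacle will be precisely this verification, since neither full faithfulness nor semiorthogonality is automatic under restriction to a divisor; controlling these phenomena is exactly what the divisor-restriction framework of \cite{L-P-divisor} is designed to do, and applying it uniformly across all conjugacy classes $\lambda$ is where the bulk of the work lies.
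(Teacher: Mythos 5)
Your reduction matches the paper's exactly: invoke Theorem \ref{thm:zero-fixed} to land in Type (A), (B), or (C); cite \cite[Theorem B]{pvdb-equivariant} for (A); cite Theorem \ref{thm:msod_typeC} for (C); and attack Type (B) by restricting the known decomposition of $\DD[E^{n+1}/S_{n+1}]$ to the $S_{n+1}$-invariant divisor $A$ using \cite{L-P-divisor}. Your preliminary remarks (trivial $G$, $\dim A=1$ via \cite[Theorem 1.2]{polishchuk-toric}) are harmless additions the paper does not bother with.

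However, for Type (B) you stop at the point where the proof actually begins. You write that full faithfulness and semiorthogonality must be ``verified'' after restriction to the divisor, and that ``applying it uniformly across all conjugacy classes $\lambda$ is where the bulk of the work lies'' --- but you never identify what the hypothesis of \cite[Thm.\ 1.2.1]{L-P-divisor} actually is, nor do you check it. That theorem reduces the whole verification to a concrete geometric condition: for each $\lambda$, setting $W(\lambda)=C(\lambda)/\langle\lambda\rangle$ and $X_\lambda^{fr}$ the locus in the fixed locus $X_\lambda\subset E^{n+1}$ where $W(\lambda)$ acts freely, one must show that $A\cap X_\lambda^{fr}$ is dense in $A\cap X_\lambda$. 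The paper then does exactly this: writing $\lambda=(1^{r_1})(2^{r_2})\cdots(k^{r_k})$, identifying $X_\lambda\cong E^{r_1}\times\cdots\times E^{r_k}$ and $W(\lambda)\cong S_{r_1}\times\cdots\times S_{r_k}$, writing out the linear condition cutting $A\cap X_\lambda$, and observing that the free locus is carved out by a coordinate inequality which can always be satisfied after adjusting the last coordinate by a $k$-torsion element of $E$, hence is open and dense on each connected component. Without this identification and verification, the Type (B) case --- which you correctly call the substantive one --- is not proved; you have only stated that some verification must take place. Make the hypothesis of \cite[Thm.\ 1.2.1]{L-P-divisor} explicit and carry out the density check; that is the entire content of the Type (B) argument.
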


\begin{proof}
Under the assumptions, the classification of Theorem \ref{thm:ALAQClassification} holds, so we need to cover three cases. 

  The case \(G\cong (\mu_k)^n\rtimes S_n\) with \(k\neq 1\) is covered in
  \cite[Section 4.3]{pvdb-equivariant}. 
Type (B) is covered in Lemma \ref{lem:MSODTypeB}.
  Lastly, if \(A\) is a surface of Type (C), this is Theorem
  \ref{thm:msod_typeC}.
\end{proof}

\subsection{Positive-dimensional fixed loci}
\label{ssec:pos-dim}

Suppose now that \(\dim(A^G)>0\). We will reduce this case to the setting of \S \ref{ssec:zero-dim}. Intuitively, we would like to decompose $A$ as a product of abelian varieties $P_G \times A_0$, where \(A_0\subset A^G\) is the connected
component containing the identity and $G$ acts on $P_G$ with $\dim (P_G)^G=0$. Then, we could apply Theorem \ref{thm:fixing-origin} to $[P_G/G]$ and combine it with the trivial action of $G$ on $A_0$ to prove the theorem for $A$. 

This is subtle in general, since $A$ is only isogenous to such a product. More precisely, there is an isogeny 
\[
  P_G\times A_0 \to A
\]
with kernel \(\Delta = P_G\cap A_0\)\footnote{Here, $P_G$ is taken to be a complementary abelian subvariety of $A_0$. We refer the reader to \cite[Section 5.3]{BL04_AbelianVarieties} for generalities on complementary abelian varieties, and to \cite[\S 2.3]{ga-smooth1} for a construction similar to the one at hand. The notation $P_G$ is reminiscent of the fact that if $A$ is the Jacobian of a curve $X$ and $G$ acts on $X$, then $P_G$ is the Prym variety of the morphism $X \to X/G$.}.
 Moreover, \(\Delta\) acts freely on \(A_0\) and there is an isomorphism
\[
  P_G\times A_0/\Delta\cong A.
\]
Set \(\tilde{A} = P_G\times A_0\) for
convenience. Composing the above isomorphism with pushforward along $[\tilde{A}/\Delta] \to \Tilde{A}/\Delta \cong A$ induces an equivalence:
\[
  \DD(\tilde{A})^\Delta\cong \DD(A).
\]
Moreover, since \(\Delta\subset A^G\), we see that \(\Delta\) and \(G\) commute
with each other inside \(\mathrm{Aut}(P_G\times A_0)\). So we can consider 
\(  [\tilde{A}/G\times \Delta]\) and the corresponding derived equivalence:
\[
  \DD[\tilde{A}/G]^\Delta\cong \DD[\tilde{A}/G\times \Delta]\cong \DD[A/G].
\]
(Note, $\Delta$ acts diagonally, on both \(P_G\) and \(A_0\)).

Our goal is then to obtain an orbifold semiorthogonal decomposition for $[\Tilde{A}/G]$ and to modify it suitably to make it $\Delta$-equivariant and descend it to $[A/G]$. 

Observe that the quotient stack \([\tilde{A}/G]\) possesses an orbifold semiorthogonal
decomposition. In fact, we may write
\[ \Tilde{A}/G = (P_G\times A_0)/G = (P_G/G) \times A_0 \]
since $G$ acts trivially on $A_0$. By construction, $\dim (P_G)^G =0$, so $[P_G/G]$ admits an orbifold semiorthogonal decomposition by Theorem \ref{thm:fixing-origin}. Then, so does $[\Tilde{A}/G]$ using \cite[Lemma 2.4.1]{L-P-divisor}.

Consider the functors defining the decomposition:
\[
  \Phi_\lambda\colon \DD(P_G^\lambda/C_G(\lambda)\times A_0) \hookrightarrow
  \DD[\tilde{A}/G].
\]
The functor \(\Phi_\lambda\) corresponds to a \(G\)-equivariant kernel
\(\KK_\lambda\) on \(\tilde{A}^\lambda\times \tilde{A}\), where \(G\) acts on
the right. 

Define the subscheme of \( (P_G^\lambda\times A_0)\times \tilde{A}\)
\[
	Z_\lambda^\Delta = \bigcup_{(g,\delta)\in G\times  \Delta} (g,\delta)^\ast\Gamma
\]
with its reduced scheme structure, where \(\Gamma\) is the graph of the inclusion. Equivalently, if \(Z_\lambda\) is the Fourier-Mukai kernel defining \(\Phi_\lambda\), then
\[
	Z_\lambda^\Delta = \bigcup_{\delta\in\Delta}\delta^\ast Z_\lambda.
\]

\begin{lemma}
	For each distinct \(\delta, \tau\in \Delta\) the subschemes \(\delta^*Z_\lambda, \tau^*Z_\lambda\) do not intersect.
\label{lem:delta-equivariant}
\end{lemma}

\begin{proof}
	It suffices to prove the claim when \(\tau\) is the identity. Suppose \( (x,g(x)) = (x,\delta(h(x)))\) for some \(g,h\in G\). Since the action of \(G\) commutes with \(\Delta\) we have \(x = \delta(g^{-1}h(x))\). But \(\Delta\) acts freely on the quotient space \(\tilde{A}/G\) as it acts freely on \(A_0\), a contradiction.
\end{proof}

\begin{theorem}
  For each \(\lambda\in G/\sim\), the functors \(\Phi_\lambda\) are
  fully-faithful and there exists a total order on \(G/\sim\) such that the
  \(\Phi_\lambda\) give rise to an orbifold semiorthogonal decomposition of \(\DD[A/G]\).
  \label{thm:pos-dim-fixed}
\end{theorem}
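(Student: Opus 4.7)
The plan is to first build the motivic semiorthogonal decomposition on the cover $\tilde{A} = P_G \times A_0$, where the $G$-action splits as a product, and then descend to $\mathcal{D}[A/G]$ by taking $\Delta$-invariants via the equivalence $\mathcal{D}[\tilde{A}/G]^\Delta \cong \mathcal{D}[A/G]$ already noted in the setup.

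Since $A_0$ is the connected component of $A^G$ through the identity and $P_G$ is the complementary Prym, the $G$-action on $P_G$ has $0$-dimensional fixed locus. If $G$ does not act irreducibly on $T_e(P_G)$, I would first invoke Theorem~\ref{thm:direct-prod-decomp} to split $P_G$ and $G$ into product factors and proceed by induction on the number of factors. In the irreducible case, Theorem~\ref{thm:fixing-origin} supplies a motivic SOD of $\mathcal{D}[P_G/G]$ with fully-faithful embeddings $\Psi_\lambda\colon \mathcal{D}(P_G^\lambda/C_G(\lambda))\hookrightarrow \mathcal{D}[P_G/G]$.

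Because $G$ acts trivially on $A_0$, there is an external-product equivalence $\mathcal{D}[\tilde{A}/G]\cong \mathcal{D}[P_G/G]\otimes \mathcal{D}(A_0)$, so tensoring each $\Psi_\lambda$ with $\mathrm{Id}_{\mathcal{D}(A_0)}$ produces the $\Phi_\lambda$ of the statement, with image $C_\lambda\cong \mathcal{D}(\tilde{A}^\lambda/C_G(\lambda))$, still fully-faithful, and the order inherited from the SOD on $P_G/G$ assembles them into a motivic SOD $\mathcal{D}[\tilde{A}/G] = \langle C_{\lambda_1},\ldots,C_{\lambda_r}\rangle$. The next step is to check $\Delta$-equivariance: $\Delta$ acts on $\tilde{A}$ by antidiagonal translations that commute with $G$; since $\Delta\subset P_G^G\subset P_G^\lambda$ and $\Delta$ acts freely on $A_0$, each $\tilde{A}^\lambda$ is $\Delta$-stable, and the kernel $\mathcal{K}_\lambda$ is canonically $\Delta$-equivariant. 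Taking $\Delta$-invariants then produces a semiorthogonal decomposition of $\mathcal{D}[A/G]$ whose $\lambda$-th component is $C_\lambda^\Delta\cong \mathcal{D}([\tilde{A}^\lambda/(\Delta\times C_G(\lambda))])$.

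The main obstacle will be identifying this descended component with $\mathcal{D}(A^\lambda/C_G(\lambda))$, as required by Definition~\ref{def:msod}. The subtlety is that the preimage of $A^\lambda$ in $\tilde{A}$ is the possibly larger set $\{x\in\tilde{A}\mid \lambda(x)-x\in\Delta\}$, so $\tilde{A}^\lambda/\Delta$ may in general cut out only one connected component of $A^\lambda$, with the remaining translation-defect components contributing to other sectors. Resolving this requires either invoking the smoothness classification of \cite{ga-smooth1, ga-smooth2} to rule out the extra defect under the hypothesis that $A/G$ be smooth, so that the map $\tilde{A}^\lambda/\Delta\to A^\lambda$ is the expected coarse quotient, or regrouping connected components of descended sectors across conjugacy classes so that each class contributes a single piece matching $\mathcal{D}(A^\lambda/C_G(\lambda))$. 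Once this bookkeeping is in place, $\mathcal{D}(A)$-linearity of each $\Phi_\lambda$ is inherited from the $\mathcal{D}(\tilde{A})$-linearity on the cover, completing the motivic SOD.
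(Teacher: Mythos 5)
Your proposal follows essentially the same route as the paper: construct the motivic semiorthogonal decomposition on the cover $\tilde{A}=P_G\times A_0$, using the product structure together with Theorem~\ref{thm:fixing-origin} (or a further splitting if $G$ does not act irreducibly on $T_e(P_G)$), and then descend to $\mathcal{D}[A/G]\cong \mathcal{D}[\tilde{A}/G]^\Delta$ by checking that each component is preserved by $\Delta$. The paper establishes $\Delta$-invariance by a support argument: the image of $\Phi_\lambda$ is supported on the union $\bigcup_{\mu\sim\lambda}\tilde{A}^\mu$, and since $\Delta\subset A^G$ commutes with $G$, each $\delta\in\Delta$ maps $\tilde{A}^\mu$ into itself; the descent of the semiorthogonal decomposition is then cited from a result of Antieau--Elmanto. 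Your emphasis on $\Delta$-equivariance of the kernel $\mathcal{K}_\lambda$ is morally the same point, differently packaged.

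Where you diverge from the paper is in the final identification of the descended sector with $\mathcal{D}(A^\lambda/C_G(\lambda))$. You flag a potential ``translation defect,'' namely that $\pi^{-1}(A^\lambda)=\{x\in\tilde{A}:\lambda(x)-x\in\Delta\}$ could be strictly larger than $\tilde{A}^\lambda$, and you propose to cure this by invoking the smoothness classification or by regrouping components. This worry does not actually arise. The exact sequence $1\to\Delta\to P_G\times A_0\to A\to 1$ embeds $\Delta$ antidiagonally, as $\{(\delta,-\delta):\delta\in P_G\cap A_0\}$, and $\lambda$ acts trivially on the $A_0$ factor; hence for $x=(p,a_0)$ one has $\lambda(x)-x=(\lambda(p)-p,0)$, and this lies in the antidiagonal $\Delta$ only when $\lambda(p)=p$. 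So $\pi^{-1}(A^\lambda)=P_G^\lambda\times A_0=\tilde{A}^\lambda$ exactly, giving $\tilde{A}^\lambda/\Delta\cong A^\lambda$ (with $\Delta$ acting freely, through the $A_0$ factor), and then $[(P_G^\lambda/C_G(\lambda)\times A_0)/\Delta]\cong A^\lambda/C_G(\lambda)$ as the paper states. Replacing your two proposed workarounds with this direct computation would close the one genuine gap in your write-up; everything else in your outline matches the paper's argument.
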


\begin{proof}
  We show the functors defining orbifold semiorthogonal decompositions for \(\mathcal{D}[\tilde{A}/G]\) descend to a semiorthogonal decomposition of \(\Delta\)-equivariant derived categories by showing that they are invariant under the action of \(\Delta\). To that end, notice that there are isomorphisms
  \[
    \tilde{A}^\lambda/C_G(\lambda)\times\Delta\cong A/C_G(\lambda)
  \]
  which give derived equivalences
  \[
    \DD(\tilde{A}^\lambda/C_G(\lambda))^\Delta\cong \DD(A/C_G(\lambda))
  \]
  and
  \[
    \DD[\tilde{A}/G]^\Delta\cong \DD[A/G].
  \]

  Since \(A/G\) is smooth, so is \(P_G/G\). If \(G\) does not act irreducibly on
  \(T_e(P_G)\), then as before the orbifold decompositions will decompose as
  well. So we can assume that \([P_G/G]\) is of types (A), (B), or (C).

  In each of the irreducible cases, the kernels defining Fourier-Mukai functors are supported on:
  \[
    Z_\lambda = \bigcup_{g\in G}g^* \Gamma_\lambda
  \]
  with its reduced structure. By Lemma \ref{lem:delta-equivariant}, the corresponding \(\Delta\)-equivariantized Fourier-Mukai kernels do not intersect. It follows from this that the Fourier-Mukai functors
  \[
  		\Phi_\lambda^\Delta = \Phi_{\mathcal{O}_{Z_\lambda^\Delta}}\colon \mathcal{D}(P_G^\lambda\times A_0/(C(\lambda)\times\Delta))\to \mathcal{D}[\tilde{A}/\Delta\times G]
	\]
	remain fully-faithful, semiorthogonal, and generate the \(\mathcal{D}[A/G]\). Finally, we have the identification
	\[
		P_G^\lambda\times A_0/(C(\lambda)\times\Delta)\cong A^\lambda/C(\lambda)
	\]
	which completes the argument.
\end{proof}

\subsection{Translations}
\label{ssec:trans}

Suppose now that \(G\) does not fix the origin. That is, \(G\) possesses a normal subgroup of translations. Let \(T\subset G\) be the
subgroup generated by translations and \(H\) be the subgroup that fixes the
origin. Notice that \(T\) acts freely and \(G\) admits a semi-direct product
decomposition:
\[
  G = T\rtimes H.
\]
Let \(A_T = A/T\), which is again an Abelian variety. The action of \(H\) on \(A\) descends to an
action on \(A_T\) given by \(h(\overline{a}) = \overline{h(a)}\), where \(a\) is any
preimage of \(\overline{a}\) under the quotient map \(\pi\colon A\to A_T\). This is well-defined as \(T\) is
a normal subgroup. Our goal will be to construct an orbifold semiorthogonal
decomposition of \(\DD[A/G]\) by using the one we know exists for
\(\DD[A_T/H]\). 

In fact, we can prove a more general statement. Suppose a finite group \(G =
K\rtimes H\) acts effectively on a smooth quasi-projective variety \(X\) with \(K\) acting
freely so that \(X_K = X/K\) is also a smooth quasi-projective variety. Then
\(H\) acts on \(X_K\). 

\begin{lemma}
  Pick conjugacy class representatives \( (k_1,h),\ldots,(k_r,h)\) so that \(
  (k,h)\) is conjugate to one of \( (k_i,h)\). Then there is an isomorphism of
  varieties
  \[
    \coprod_{i=1}^r X^{(k_i,h)}/C_G( (k_i,h))\xrightarrow{f} X_K^h/C_H(h).
  \]
  \label{lem:translation-conjugacy-decomp}
\end{lemma}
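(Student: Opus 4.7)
The plan is to trace the étale cover $\pi\colon X \to X_K$ through the fixed-locus construction. Since $K$ acts freely, $\pi$ is a principal $K$-bundle, and I would first identify
\[
\pi^{-1}(X_K^h) \;=\; \bigsqcup_{k \in K} X^{(k,h)}
\]
as a disjoint union of closed subschemes of $X$. Indeed, $\bar{x} \in X_K^h$ iff $h(x)$ and $x$ lie in the same $K$-orbit, and freeness of $K$ forces the element $k \in K$ with $h(x) = k^{-1}(x)$ to be unique; this $x$ then lies in $X^{(k,h)}$. Taking the quotient by $K$ then recovers $X_K^h = \pi^{-1}(X_K^h)/K$, since $\pi$ restricts to a principal $K$-bundle over $X_K^h$ as well.

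Next I would analyze the commuting natural actions. A direct computation in the semidirect product shows that conjugation by $(k',h') \in G$ preserves the coset $Kh$ exactly when $h' \in C_H(h)$, and in that case sends $(k,h)$ to $\bigl(k'\, h'(k)\, h(k'^{-1}),\, h\bigr)$. Geometrically, this means that the subgroup $K \rtimes C_H(h) \subset G$ acts on $\pi^{-1}(X_K^h)$, permuting the components $X^{(k,h)}$ according to the twisted action $(k',h')\cdot k = k'\,h'(k)\,h(k'^{-1})$; the orbits of this action are exactly the $G$-conjugacy classes meeting $Kh$, so the chosen representatives $(k_1,h),\ldots,(k_r,h)$ index a transversal. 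A short check (equating the products $(k',h')(k_i,h)$ and $(k_i,h)(k',h')$ in $G$) then identifies the stabilizer of $k_i$ inside $K \rtimes C_H(h)$ with $C_G((k_i,h))$; the inclusion $C_G((k_i,h)) \subset K \rtimes C_H(h)$ is automatic because centralizing $(k_i,h)$ forces the $H$-component to commute with $h$.

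Finally I would assemble these ingredients:
\[
X_K^h / C_H(h) \;=\; \pi^{-1}(X_K^h) \big/ \bigl(K \rtimes C_H(h)\bigr) \;=\; \coprod_{i=1}^{r} X^{(k_i,h)} \big/ C_G((k_i,h)),
\]
where the first equality uses that $K$ acts freely (so that an $(N\rtimes Q)$-action with free $N$ gives $Y/(N \rtimes Q) = (Y/N)/Q$), and the second is the orbit–stabilizer decomposition applied component-by-component to the disjoint union. The only real obstacle is bookkeeping with the twisted multiplication in $K \rtimes H$: writing the conjugation formula carefully is what makes both the stabilizer identification $\mathrm{Stab}_{K \rtimes C_H(h)}(k_i) = C_G((k_i,h))$ and the orbit description of $G$-conjugacy classes in $Kh$ align. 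Once the semidirect-product twists are handled, every step of the argument is formal.
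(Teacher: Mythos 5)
Your proof is correct, and it takes a genuinely different route from the paper's. The paper argues directly on closed points: it lifts a point of $X_K^h$ through $\pi$, uses freeness of $K$ to isolate a unique $k$ with $(k,h)(x)=x$, shows that distinct representatives $(k_i,h)$ and $(k_j,h)$ give disjoint images in the quotient (since an identification via $C_H(h)$ would force $(k_i,h)\sim(k_j,h)$), shows that identifications within a single $X^{(k_i,h)}$ are exactly governed by $C_G((k_i,h))$, and then invokes normality and quasi-projectivity to promote the resulting bijection to an isomorphism of varieties. You instead work structurally on the \'etale cover: you decompose $\pi^{-1}(X_K^h)=\bigsqcup_{k\in K}X^{(k,h)}$, observe that $K\rtimes C_H(h)$ acts on this disjoint union permuting the components via the twisted conjugation $(k',h')\cdot k = k'\,h'(k)\,h(k'^{-1})$, identify the orbits of this action with the $G$-conjugacy classes meeting $Kh$ and the stabilizer of $k_i$ with $C_G((k_i,h))$, and finish by a two-step quotient together with orbit--stabilizer applied component-by-component. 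Your route produces the isomorphism directly as a quotient construction rather than as a pointwise bijection that needs to be upgraded, and it makes the semidirect-product conjugation formula the single organizing computation; the paper's route is more elementary and requires no bundle or quotient-stack machinery beyond pointwise verification plus normality. Underneath, both arguments rest on the same two pillars: freeness of $K$, and the fact that conjugating an element of $Kh$ back into $Kh$ forces the $H$-component of the conjugator to lie in $C_H(h)$.
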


\begin{proof}
First, we define $f$: for $x\in X^{(k_i,h)}$, we have
\begin{align*}
    (k_i,h) x & = x \\
    (k_i^{-1},1)(k_i,h) x & = (k_i^{-1},1)x \\
    (1,h) x & = (k_i^{-1},1)x. 
\end{align*}
Writing the equality modulo $K$, we have $h(\overline{x}) = \overline{x}$. This defines a map $f'\colon  X^{(k_i,h)} \to X_K^h \to X_K^h/C_H(h)$ for each $k_i$. To factor this map through $X^{(k_i,h)}/C_G( (k_i,h))$ and obtain $f$, it suffices to show that if $(k,h') \in C_G((k_i,h))$ then $h'\in C_H(h)$, but this is straightforward.

It follows from Zariski's main theorem \cite[Th\'eor\`eme 4.4.3]{EGAIII} that a bijective morphism between connected, normal, complex varieties is an isomorphism. Then, it suffices to show that $f$ maps each connected component of the domain bijectively onto its image. 

For surjectivity, we take
  \(\overline{x}\in X_K^h\) and show there exists \( k\in K, h\in H\) and \(x\in X\)
  so that \( (k,h)(x) = x\) and \(\overline{x}\) is conjugate to the image of \(x\)
  under the quotient \(X\to X_K\). Pick any lift \(x\) of \(\overline{x}\). Since
  \(h(\overline{x})=\overline{x}\), \(h(x)\) is in the same orbit as \(x\) under the \(K\)
  action. That is, there exists \(k\in K\) so that \(k(h(x)) = x\). There exists
  \( (k',h')\) so that conjugating \((k,h)\) by \( (k',h')\) is one of the \(
  (k_i,h)\) classes. It follows that \(x\) is conjugate to the image of
  \(k_i(h(x))\) under the action of \(H\). This proves surjectivity.

  Since the action of \(K\) is free, if \(k\neq k'\), then \(X^{(k,h)}\cap X^{(k',h)} = \emptyset\). Indeed, any \(x\) in the intersection would force \(k^{-1}k'h(x) = h(x)\) and hence \(k^{-1}k'\) stabilizes a point. But only the identity stabilizes a point. Hence, we have \(X^{(k_i,h)}\cap X^{(k_j,h)}=\emptyset\)
  for \(i\neq j\). 

	Now if \(x_i\in X^{(k_i,h)}\) and \(x_j\in X^{(k_j,h)}\), then
  the images of \(x_i\) and \(x_j\) are not identified by an element in the
  centralizer of \(C_H(h)\). Indeed, suppose they were, i.e. that there exists
  \(h'\in H\) with \(h'h = h'h\) and \(h'(\overline{x}_i) = \overline{x}_j\). Then
  \(h'(x_i)\in X^{(k_j,h)}\). So
  \[
    x_i=(1,h')^{-1}(k_j,h)(h'(x_i)) = (k_j^{h'},h)(x_i).
  \]
  Hence, \( (k_j,h)\) is conjugate to \( (k_i,h)\) but this cannot happen if
  \(i\neq j\). It follows that disjoint components have disjoint images. 
  
  Now if \(x,y\in X^{(k_i,h)}\) are such that \(\overline{x} = \overline{y}\). Then, since \(K\) acts freely, there
  exists a unique \(k\in K\) such that \(k(x) =y\). We need to see that \(
  (k,1)\) centralizes \( (k_i,h)\). Conjugation of \(
  (k_i,h)\) by \(  (k,1)\) fixes \(x\):
  \[
    (k_i,h)^{(k,1)}(x) = (k,1)^{-1}(k_i,h)(k,1)(x) = (k,1)^{-1}(h_i,k)(y) =
    (k,1)^{-1}(y) = x.
  \]
  We conclude that the fixed locus of \( (k_i,h)^{(k,1)}\) and \( (k_i,h)\)
  intersect. But this only happens if \( (k_i,h)^{(k,1)} = (k_i,h)\) and hence
  \( (k,1)\) centralizes \( (k_i,h)\).
\end{proof}

The following is immediate.

\begin{corollary}
  Let \( (k_1,h),\ldots,(k_r,h)\) be as in Lemma
  \ref{lem:translation-conjugacy-decomp}. Then
  \[
    \DD(X_K^h/C_H(h)) \cong \bigoplus_{i=1}^r \DD(X^{(k_i,h)}/C_G(k_i,h)).
  \]
\end{corollary}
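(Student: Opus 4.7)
The plan is to derive this as a direct consequence of Lemma \ref{lem:translation-conjugacy-decomp}. That lemma produces an isomorphism of quasi-projective varieties
\[
  \Psi\colon \coprod_{i=1}^r X^{(k_i,h)}/C_G((k_i,h)) \xrightarrow{\ \sim\ } X_K^h/C_H(h),
\]
and I would simply apply the derived category functor \(\DD(-)\) to both sides. Passing \(\DD\) through a finite disjoint union turns it into a direct sum, since coherent sheaves on a disjoint union decompose uniquely as tuples of coherent sheaves on each component (the inclusion of a clopen subscheme is both open and closed, so pushforward and restriction are exact and compatible with Hom). This gives
\[
  \DD\!\left(\coprod_{i=1}^r X^{(k_i,h)}/C_G((k_i,h))\right) \cong \bigoplus_{i=1}^r \DD\!\left(X^{(k_i,h)}/C_G((k_i,h))\right),
\]
and combining with the equivalence \(\DD(\Psi)\) yields exactly the statement.

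The key point that must be verified along the way is that the varieties appearing are genuinely schemes (and not stacks) so that \(\DD\) of a disjoint union is literally the direct sum of derived categories of components. But this is already built into Lemma \ref{lem:translation-conjugacy-decomp}: the proof works at the level of coarse moduli spaces, and all varieties involved are normal quasi-projective, so the \(\coprod\) is a genuine scheme-theoretic disjoint union.

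There is no real obstacle here; the corollary is a formal consequence of the previous lemma together with the compatibility of \(\DD(-)\) with disjoint unions. The only thing worth emphasizing is that the decomposition on the right-hand side is canonical with respect to the choice of conjugacy class representatives \((k_i,h)\), which reflects the fact that the components of the fixed locus \(X_K^h\) are indexed precisely by these conjugacy classes.
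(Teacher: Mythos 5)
Your proof is correct and is exactly what the paper has in mind — the paper even labels the corollary ``immediate'' from Lemma \ref{lem:translation-conjugacy-decomp}, and your argument (apply $\DD(-)$ to the isomorphism of varieties and use that the derived category of a finite disjoint union of schemes is the direct sum of the derived categories of the components) is the intended one.
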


\begin{lemma}
  There is an equivalence of categories:
  \[
    \mathfrak{coh}(X)^G\cong \mathfrak{coh}(X_K)^H
  \]
  induced by the quotient map \(\pi\colon X\to X_K\). This functor can also be
  described by taking \(T\)-invariants. In particular, this equivalence of Abelian categories induces a derived equivalence:
  \[
  	\mathcal{D}[X/G]\cong \mathcal{D}[X_K/H].
  	\]
  \label{lem:semi-direct-free}
\end{lemma}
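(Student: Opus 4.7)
The plan is to reduce to classical equivariant descent for the free action of $K$, and then use the semidirect product decomposition to lift to the full $G$-equivariant setting. The whole argument can in fact be packaged as an isomorphism of quotient stacks $[X/G]\cong [X_K/H]$, from which all the statements follow.

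First, since $K$ acts freely on $X$ and the quotient $X_K$ is smooth, the map $\pi\colon X\to X_K$ is an \'etale Galois $K$-cover, so $[X/K]\cong X_K$. Classical descent gives an equivalence of Abelian categories
$$\mathfrak{coh}(X)^K \;\xrightarrow{\sim}\; \mathfrak{coh}(X_K),\qquad \mathcal{F}\longmapsto (\pi_*\mathcal{F})^K,$$
with quasi-inverse $\pi^*$ endowed with its tautological $K$-linearization. Next, because $K$ is normal in $G$ and the extension $1\to K\to G\to H\to 1$ splits, a $G$-linearization on $\mathcal{F}$ is equivalent to a $K$-linearization together with a compatible $H$-action, so
$$\mathfrak{coh}(X)^G \;\simeq\; \bigl(\mathfrak{coh}(X)^K\bigr)^H.$$

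The $H$-action appearing on the right matches, under the first equivalence, the canonical $H$-action on $\mathfrak{coh}(X_K)$ induced by the $H$-action on $X_K$ (well-defined because $K\trianglelefteq G$). Composing the two equivalences yields $\mathfrak{coh}(X)^G\simeq \mathfrak{coh}(X_K)^H$, with the explicit description $\mathcal{F}\mapsto (\pi_*\mathcal{F})^K$ built into the first step. Passing to bounded derived categories on smooth quotient stacks then gives the derived equivalence $\mathcal{D}[X/G]\cong\mathcal{D}[X_K/H]$.

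The main obstacle is essentially bookkeeping: one must verify that the $H$-action appearing naturally on $(\mathfrak{coh}(X)^K)^H$ from the split sequence $1\to K\to G\to H\to 1$ coincides with the pull-back action along the $H$-automorphisms of $X_K$. This compatibility is exactly where the semidirect product hypothesis $G=K\rtimes H$ is used essentially; without a splitting one would merely obtain an $H$-action on $\mathfrak{coh}(X_K)$ up to 2-isomorphism, and the description of $\mathfrak{coh}(X)^G$ would involve extra cocycle data.
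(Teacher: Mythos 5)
Your proposal is correct and is essentially the paper's proof, just written out in more detail: the paper likewise factors $\mathfrak{coh}(X)^G$ as a $K$-equivariant structure (equivalently, a sheaf on $X_K$ via descent along the free $K$-cover) together with a compatible $H$-equivariant structure, and leaves the compatibility bookkeeping to the reader. Your discussion of where the splitting enters fleshes out that omitted verification but does not change the argument.
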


\begin{proof}
  This is straightforward. If \(\FF\) is a sheaf on \(X\), then a
  \(G\)-equivariant structure is the data of a \(K\)-equivariant structure with
  a compatible \(H\)-equivariant structure. The data of a \(K\)-equivariant
  structure is the data of a pull-back of a sheaf \(\overline{\FF}\) on \(X_K\). The data of a
  compatible \(H\)-equivariant structure is the data of an \(H\)-equivariant
  structure on \(\overline{\FF}\). The details are left to the reader.
\end{proof}

\begin{theorem}
  Suppose \(G = K\rtimes H\) acts on a smooth quasi-projective variety \(X\) so that
  \(K\) acts freely. Consider the induced action of \(H\) on the quotient \(X_K
  = X/H\) and suppose \(\mathcal{D}[X_K/H]\) admits an orbifold semiorthogonal
  decomposition of the form
  \[
    \mathcal{D}[X_K/H] = \langle
    \DD(\overline{X}_K^{h_1}),\ldots,\DD(\overline{X}_K^{h_t})\rangle.
  \]
  Then \(\mathcal{D}[X/G]\) admits an orbifold semiorthogonal decomposition.
  \label{thm:msod}
\end{theorem}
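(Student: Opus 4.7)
The plan is to transport the given motivic semiorthogonal decomposition across the equivalence provided by Lemma \ref{lem:semi-direct-free}, and then refine each piece using the decomposition of fixed loci established in Lemma \ref{lem:translation-conjugacy-decomp} and its corollary. Concretely, Lemma \ref{lem:semi-direct-free} identifies \(\mathcal{D}[X/G]\cong \mathcal{D}[X_K/H]\), so the hypothesized decomposition immediately yields
\[
  \mathcal{D}[X/G] = \bigl\langle \DD(\bar{X}_K^{h_1}),\ldots,\DD(\bar{X}_K^{h_t})\bigr\rangle.
\]
This already gives a semiorthogonal decomposition; the work is in showing it is \emph{motivic}, i.e.\ indexed by \(G/\!\sim\) with pieces of the form \(\DD(\bar{X}^\lambda)\).

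The next step is to apply the corollary of Lemma \ref{lem:translation-conjugacy-decomp} component-wise: for each \(h_j\in H/\!\sim\), one has
\[
  \DD(\bar{X}_K^{h_j}) \cong \bigoplus_{i=1}^{r_j} \DD\bigl(X^{(k_i,h_j)}/C_G(k_i,h_j)\bigr),
\]
where \((k_i,h_j)\) runs over the conjugacy classes of \(G\) lying over \(h_j\) under the projection \(G\to H\). Since every element of \(G\) has some image in \(H\), ranging \(j\) over \(H/\!\sim\) exhausts \(G/\!\sim\) exactly once. Substituting these direct sum decompositions back into the ambient semiorthogonal decomposition refines it into
\[
  \mathcal{D}[X/G] = \bigl\langle \DD(\bar{X}^{\lambda})\bigr\rangle_{\lambda\in G/\sim},
\]
with the order inherited from the chosen ordering on \(H/\!\sim\), breaking ties within each fiber arbitrarily (the summands within a fixed \(\DD(\bar{X}_K^{h_j})\) are orthogonal, being supported on disjoint components).

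The main technical point I expect to be the real obstacle is showing that the Fourier--Mukai-type functors one obtains from this refinement coincide (up to the equivalence of Lemma \ref{lem:semi-direct-free}) with the natural embeddings \(\DD(\bar{X}^\lambda)\hookrightarrow \DD[X/G]\), and that they are \(\DD(X/G)\)-linear. For linearity, since \(X/G = X_K/H\) as coarse moduli spaces and the equivalence of Lemma \ref{lem:semi-direct-free} is \(\DD(X/G)\)-linear (being induced by the quotient map \(X\to X_K\)), any decomposition linear over \(\DD(X_K/H)\) transports to one linear over \(\DD(X/G)\). For the identification of functors, one traces through: a point of \(\bar{X}^{(k_i,h_j)}\) maps to a \(C_G(k_i,h_j)\)-orbit in \(X^{(k_i,h_j)}\); its image in \(X_K\) lies in a single \(C_H(h_j)\)-orbit inside \(X_K^{h_j}\), matching the bijection of Lemma \ref{lem:translation-conjugacy-decomp}. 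This makes the refined piece \(\DD(\bar{X}^{(k_i,h_j)})\) canonically identified with the corresponding summand of \(\DD(\bar{X}_K^{h_j})\), and the composite embedding into \(\DD[X/G]\) is then the expected Fourier--Mukai functor supported on the equivariantized graph of \(X^{(k_i,h_j)}\hookrightarrow X\). With these identifications in place, the refined decomposition satisfies all the axioms of Definition \ref{def:msod}.
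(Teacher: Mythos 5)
Your proposal is correct and takes essentially the same route as the paper: transport the decomposition across the equivalence of Lemma \ref{lem:semi-direct-free}, refine each piece $\DD(\bar{X}_K^{h_j})$ into its orthogonal summands via Lemma \ref{lem:translation-conjugacy-decomp} and its corollary, and deduce $\DD(X/G)$-linearity from the canonical identification $X/G\cong X_K/H$. The paper is terser on the functor-identification point you flag as the main obstacle, but the argument is the same.
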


\begin{proof}
  The equivalence of Lemma \ref{lem:semi-direct-free} gives a derived equivalence
  \[
    \mathcal{D}[X/G] \cong \mathcal{D}[X_K/H].
  \]
  The orbifold semiorthogonal decomposition of \(\DD[X_K/H]\) directly induces
  one of \(\DD[X/G]\). The pieces \(\DD(X^h/C_H(h))\) of the orbifold semiorthogonal decomposition of
  \(\DD[X_K/H]\) decompose into \(\DD(X^{(k_i,h)}/C_G(k_i,h))\). Since these are
  pairwise completely orthogonal, any total order suffices. Thus we get a
  semiorthogonal decomposition
  \[
    \DD[X/G] = \langle \bigoplus_{i=1}^{r_1}\DD(X^{(k_i,h_1)}/C_G((k_i,h_1))),
    \ldots, \bigoplus_{i=1}^{r_t}\DD(X^{(k_i,h_t)}/C_G( (k_i,h_t))\rangle.
  \]
  Finally \(\DD(X/G)\) linearity follows from \(\DD(X_K/H)\)-linearity using the
  canonical isomorphism \(X/G\cong X_K/H\).
\end{proof}

Combining Theorem \ref{thm:msod} with our earlier work gives orbifold
semiorthogonal decompositions for all Abelian varieties with smooth quotients.

\begin{theorem}
  Let \(G=T\rtimes H\) be a finite group of automorphisms of an Abelian variety \(A\) such that \(A/G\) is smooth. Then there
  exists an orbifold semiorthogonal decomposition for \(\DD[A/G]\).
  \label{thm:msod-abelian-vars-full}
\end{theorem}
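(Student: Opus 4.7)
The plan is to assemble the theorem by combining the reduction results already established earlier in the paper, peeling off the translation subgroup first and then treating the remaining action by group automorphisms case by case.

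First, I would exploit the semidirect product structure $G = T \rtimes H$ by applying Theorem \ref{thm:msod} with $K = T$ and $X = A$. Since $T$ acts freely by translations, the quotient $A_T = A/T$ is again a smooth Abelian variety, and the induced $H$-action on $A_T$ is by group automorphisms fixing the identity. The canonical isomorphism $A/G \cong A_T/H$ shows that smoothness of the coarse moduli descends to $A_T/H$. By Theorem \ref{thm:msod}, it therefore suffices to construct a motivic semiorthogonal decomposition of $\DD[A_T/H]$ and transport it back to $\DD[A/G]$ via the equivalence of Lemma \ref{lem:semi-direct-free}, using Lemma \ref{lem:translation-conjugacy-decomp} to reinterpret each piece as a disjoint union of quotients by centralizers in $G$.

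Second, I would further reduce to the irreducible, origin-fixing case. If $\dim A_T^H > 0$, Theorem \ref{thm:pos-dim-fixed} directly produces the required motivic semiorthogonal decomposition. If instead $\dim A_T^H = 0$ but $H$ does not act irreducibly on $T_e(A_T)$, Theorem \ref{thm:direct-prod-decomp} yields a splitting $A_T \cong A_1 \times \cdots \times A_k$ and $H \cong H_1 \times \cdots \times H_k$ with diagonal action and each $H_i$ acting irreducibly on $T_e(A_i)$. Under the isomorphism
\[
\DD[A_T/H] \cong \DD[A_1/H_1] \otimes \cdots \otimes \DD[A_k/H_k],
\]
motivic semiorthogonal decompositions of the factors combine into one for the product. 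The remaining irreducible case with $\dim A_T^H = 0$ falls into the three types (A), (B), (C) of Theorem \ref{thm:zero-fixed}, all of which are handled by Theorem \ref{thm:fixing-origin} (type (A) via \cite{pvdb-equivariant}, type (B) via the divisor criterion of \cite{L-P-divisor}, and type (C) via Theorem \ref{thm:msod_typeC}).

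The main obstacle is essentially bookkeeping rather than a single substantive difficulty: I need to verify that each reduction step preserves the smoothness of the coarse moduli, the $\DD(A/G)$-linearity of the embedding functors, and the compatibility of the various derived equivalences invoked, namely the $\Delta$-equivariant descent from $\tilde{A} = P_G \times A_0$ in Theorem \ref{thm:pos-dim-fixed} and the Morita-type equivalence $\DD[A/G] \cong \DD[A_T/H]$ of Lemma \ref{lem:semi-direct-free}. Once these compatibilities are verified, the theorem follows directly from the combination of Theorems \ref{thm:msod}, \ref{thm:pos-dim-fixed}, \ref{thm:direct-prod-decomp}, and \ref{thm:fixing-origin}.
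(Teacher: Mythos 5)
Your proposal is correct and follows essentially the same route the paper takes: the paper states Theorem \ref{thm:msod-abelian-vars-full} immediately after the sentence ``Combining Theorem \ref{thm:msod} with our earlier work...'' and leaves the assembly implicit, and your write-up spells out exactly that assembly -- peel off $T$ via Lemma \ref{lem:semi-direct-free} and Theorem \ref{thm:msod}, split the reducible zero-dimensional case via Theorem \ref{thm:direct-prod-decomp} and the K\"unneth-type tensor decomposition, handle positive-dimensional fixed loci via the Prym construction of Theorem \ref{thm:pos-dim-fixed}, and dispatch the irreducible zero-dimensional types (A), (B), (C) via Theorem \ref{thm:fixing-origin}. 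No gaps; the compatibility checks you flag (smoothness descending along $A/G\cong A_T/H$, linearity over $\DD(A/G)$, and $\Delta$-equivariant descent) are exactly the points the paper's earlier proofs address.
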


\section{Examples}
\label{sec:examples}

\subsection{Curves}\label{ssec:examples_curves} The one-dimensional case, while it is not covered in this paper, is well studied in the literature. Quotients of an elliptic curve $E$ by a finite group of group automorphisms are weighted projective lines in the sense of Geigle and Lenzing \cite{GL87}. The group $G$ is $\mu_k$ for $k=2,3,4$, or $6$ (hence, they all fall under the analog of Type (A) of Theorem \ref{thm:ALAQClassification}). All quotients have $\mathbb{P}^1$ as a coarse space. There are 4 $\mu_2$ stabilizers for the generic case $k=2$ and 3 stabilizers for the other cases, of orders $(3,3,3)$, $(4,4,2)$, and $(6,3,2)$ respectively. 
The categories $\mathfrak{coh}[E/G]$ are derived equivalent to the module categories over a canonical algebra in the sense of Ringel \cite{Rin90}. The exceptional collections constructed in \cite{Mel95} on $\mathcal{D}[E/G]$ give rise to orbifold semiorthogonal decompositions. 

\subsection{Surfaces}
In this section, we explicitly compute the orbifold semiorthogonal decompositions
constructed here and in \cite{pvdb-equivariant} when \(A\) is a surface and the action of \(G\) fixes \(e\) and acts irreducibly on
\(T_e(A)\). Such an action is one of
three types by Theorem \ref{thm:zero-fixed}. We have already written out Type (C) in Section \ref{sec:type-c}. We begin with Type (A):

\begin{example}
Let \(A\cong E^2\) and \(G = \mu_k^2\rtimes S_2\) for \(k=1,2,3,4,6\). The generic case, \(k=1\), has quotient \(\mathrm{Sym}^2(E)\) and the non-trivial element fixes the diagonal. By \cite[Theorem 4.1]{kuznetsov-perry-cyclic} we have
\begin{equation}
    \label{eq:SODSym2E}
  \mathcal{D}[A/G] = \langle \mathcal{D}(\mathrm{Sym}^2(E)), \mathcal{D}(E)\rangle.
\end{equation}
  
The Abel map $\mathrm{Sym}^2(E) \to J(E)\simeq E$ is a $\mathbb{P}^1$-bundle, hence the decomposition \eqref{eq:SODSym2E} can be refined into one with three indecomposable components equivalent to $\mathcal{D}(E)$ \cite{O92_blowup}.
  
In all other cases, $[A/G]$ admits a full exceptional collection. The coarse quotient is \(\mathrm{Sym}^2(\mathbb{P}^1)\simeq \mathbb{P}^2\). 
  
If \(k=2\), there are 5 
conjugacy classes, whose fixed loci and centralizers are as follows:
  
\begin{center}

\begin{tabular}{l|c|c|c}
Representative $g$ &  $A^g$ & $C(g)$ & $\overline{A^g}$\\
\hline
$(1,1,1)$  & $A$ & $G$ & $\mathbb{P}^2$\\
$(-1,-1,1)$& $E[2]\times E[2]$ & $G$ & $E[2]$\\
$(-1,1,1)$ & $E[2] \times E$ & $ \pair{(-1,1,1),(-1,-1,1)} $ & $E[2]\times \mathbb{P}^1$\\
$(1,1,\sigma)$ & $E$  &$\pair{(1,1,\sigma), (-1,-1,1)}$ & $\mathbb{P}^1$\\
$(1,-1,\sigma)$&$E[2]$&$\pair{(1,-1,\sigma)}$ & $E[2]$\\\hline
\end{tabular}
\end{center}  
  
  Thus we have
  \[
    \mathcal{D}[A/G] = \langle \mathcal{D}(\mathbb{P}^2), 
    \underbrace{\mathcal{D}(\mathbb{P}^1),...,\mathcal{D}(\mathbb{P}^1)}_{5 \mbox{ copies}},
    \underbrace{\mathcal{D}(\mathrm{pt}),...,\mathcal{D}(\mathrm{pt})}_{8 \mbox{ copies}}\rangle.
  \]
  
With a similar computation we find the orbifold semiorthogonal decomposition in the other cases. For $k=3$ there are 9 conjugacy classes. The corresponding orbifold semiorthogonal decomposition has one two-dimensional component, 3 one-dimensional components, and 5 zero-dimensional components:
 \[
    \mathcal{D}[A/G] = \langle \mathcal{D}(\mathbb{P}^2),
    \underbrace{\mathcal{D}(\mathbb{P}^1),...,\mathcal{D}(\mathbb{P}^1)}_{7 \mbox{ copies}},
    \underbrace{\mathcal{D}(\mathrm{pt}),...,\mathcal{D}(\mathrm{pt})}_{21 \mbox{ copies}}\rangle.
  \]

For $k=4$ there are 14 conjugacy classes. The corresponding orbifold semiorthogonal decomposition has one two-dimensional component, 4 one-dimensional components, and 9 zero-dimensional components:
\[
    \mathcal{D}[A/G] = \langle \mathcal{D}(\mathbb{P}^2), 
    \underbrace{\mathcal{D}(\mathbb{P}^1),...,\mathcal{D}(\mathbb{P}^1)}_{6 \mbox{ copies}},
    \underbrace{\mathcal{D}(\mathrm{pt}),...,\mathcal{D}(\mathrm{pt})}_{33 \mbox{ copies}}\rangle.
  \]


For $k=6$ there are 27 conjugacy classes.  The corresponding orbifold semiorthogonal decomposition has one two-dimensional component, 6 one-dimensional components, and 20 zero-dimensional components:
\[
    \mathcal{D}[A/G] = \langle \mathcal{D}(\mathbb{P}^2), 
    \underbrace{\mathcal{D}(\mathbb{P}^1),...,\mathcal{D}(\mathbb{P}^1)}_{ \mbox{7 copies}},
    \underbrace{\mathcal{D}(\mathrm{pt}),...,\mathcal{D}(\mathrm{pt})}_{ \mbox{26 copies}}\rangle.
  \]

\end{example}

For type (B):

\begin{example}Let $E$ be an elliptic curve, and let $A$ be the abelian surface defined by $x_1+x_2+x_3=0$ in $E^3$. Identify $A$ with $E^2$ by eliminating the variable $x_3$. Then, $S_3$ acts on $E^2$ by permuting variables, and we consider \([E^2/S_3]\). The quotient is \(\mathbb{P}^2\) by the classification \cite[Theorem 1.1]{ga-smooth2}. The group \(S_3\) acts
  on \(T_e(E^2)\) by 
  \[
    (1,2) = \begin{pmatrix}
      0 & 1 \\ 
      1 & 0 \\
    \end{pmatrix},\ 
    (1,2,3) = \begin{pmatrix}
      0 &-1 \\
      1 &-1 
    \end{pmatrix}.
  \]
The conjugacy class corresponding to \( (1,2)\) fixes the diagonal
  \(\Delta\subset E\times E \cong A\). The conjugacy class corresponding to \(
  (1,2,3)\) fixes the anti-diagonal copy of the three-torsion subgroup \(E[3]\). The centralizer of $(1,2)$ is $\left\langle (1,2)\right\rangle$, and similarly $C((1,2,3))= \left\langle (1,2,3)\right\rangle$. Then, the centralizers act trivially on the fixed loci. By
  Theorem \ref{thm:fixing-origin}, there is an orbifold semiorthogonal decomposition of
  the form
  \[
    \DD[E^2/S_3] = \langle \DD(\mathbb{P}^2), \DD(E), \DD(E[3])\rangle.
  \]
\end{example}


\providecommand{\bysame}{\leavevmode\hbox to3em{\hrulefill}\thinspace}
\providecommand{\MR}{\relax\ifhmode\unskip\space\fi MR }
\providecommand{\MRhref}[2]{%
  \href{http://www.ams.org/mathscinet-getitem?mr=#1}{#2}
}
\providecommand{\href}[2]{#2}

\end{document}